\newtheorem{thm}{Theorem}[section]
\newtheorem{lem}[thm]{Lemma}
\newtheorem{cor}[thm]{Corollary}
\newtheorem{conj}[thm]{Conjecture}
\renewcommand\le{\leqslant}
\renewcommand\ge{\geqslant}
\newcommand\Wr{{\rm \,wr\, }}
\newcommand\N{{\mathbb{N}}}
\newcommand\Aut{{\rm Aut}}
\newcommand\Sym{{\rm Sym}}
\newcommand\I{{\rm I}}
\newcommand{\calQ}{\mathcal{Q}}
\newcommand{\calP}{\mathcal{P}}
\newcommand{\calL}{\mathcal{L}}
\title{On prime order automorphisms of generalized quadrangles}
\author{Santana F. Afton}
\address{Department of Mathematics, College of William \& Mary, P.O. Box 8795, Williamsburg, VA 23187-8795, USA}
\curraddr{School of Mathematics, Georgia Institute of Technology, 686 Cherry Street, Atlanta, GA 30332-0160, USA}
\email{santana.afton@gatech.edu}
\author{Eric Swartz}
\address{Department of Mathematics, College of William \& Mary, P.O. Box 8795, Williamsburg, VA 23187-8795, USA}
\email{easwartz@wm.edu}
\begin{document}
 
\begin{abstract}
In this paper, we study prime order automorphisms of generalized quadrangles.  We show that, if $\mathcal{Q}$ is a thick generalized quadrangle of order $(s,t)$, where $s > t$ and $s+1$ is prime, and $\mathcal{Q}$ has an automorphism of order $s+1$, then 
\[ s \left\lceil \left\lceil \frac{t^2}{s+1}\right\rceil\left(\frac{s+1}{t} \right) \right\rceil \le t(s+t),\]
with a similar inequality holding in the dual case when $t > s$, $t+1$ is prime, and $\mathcal{Q}$ is a thick generalized quadrangle of order $(s,t)$ with an automorphism of order $t+1$.

In particular, if $s+1$ is prime and if there exists a natural number $n$ such that 
\[ \frac{t^2}{n+1} + t \le s + 1 < \frac{t^2}{n},\]
then a thick generalized quadrangle $\mathcal{Q}$ cannot have an automorphism of order $s+1$, and hence the automorphism group of $\mathcal{Q}$ cannot be transitive on points.  These results apply to numerous potential orders for which it is still unknown whether or not generalized quadrangles exist, showing that any examples would necessarily be somewhat asymmetric.  Finally, we are able to use the theory we have built up about prime order automorphisms of generalized quadrangles to show that the automorphism group of a potential generalized quadrangle of order $(4,12)$ must necessarily be intransitive on both points and lines.
\end{abstract}

\maketitle

\section{Introduction}

Following \cite{fgq}, a \textit{finite generalized quadrangle} $\calQ$ is an incidence structure $(\calP, \calL, \I)$, where $\calP$ is the set of \textit{points}, $\calL$ is the set of \textit{lines} (which is disjoint from $\calP$), and $\I$ is a symmetric point-line incidence relation satisfying the following axioms: 

\begin{description}
 \item[Point-line incidence] Each point is incident with $t+1$ lines and each line is incident with $s+1$ points, where $s,t \in \N$, and two distinct points (respectively, lines) are mutually incident with at most one line (respectively, point).
 \item[GQ Axiom] Given a point $P$ and a line $\ell$ not incident with $P$, there is a unique pair $(P^\prime, \ell^\prime) \in \calP \times \calL$ such that $P \;\I\; \ell^\prime \;\I\; P^\prime\; \I \;\ell$. 
\end{description}

A generalized quadrangle with $s+1$ points incident with a given line and $t+1$ lines incident with a given point is said to have \textit{order} $(s,t)$, and such a generalized quadrangle is said to be \textit{thick} if both $s > 1$ and $t > 1$.    

Generalized quadrangles (and, more generally, \textit{generalized n-gons}) were invented by Jacques Tits \cite{titsngon} to help better understand certain classical groups by providing natural geometric objects on which the groups act.  The \textit{automorphism group} of a finite generalized quadrangle is the set of permutations of the point set that preserve collinearity.  While the definition of a generalized quadrangle is purely combinatorial, the known examples of generalized quadrangles all have nontrivial (and, typically, quite robust) automorphism groups; see \cite{DeWinterThas, OkeefePenttila, PayneQClan, fgq}.  For this reason, given a generalized quadrangle $\calQ$ of order $(s,t)$, it is natural to study the possible automorphisms of $\calQ$.  Toward this end, we prove the following result: 

\begin{thm}
 \label{thm:main}
 Let $\calQ$ be a thick generalized quadrangle of order $(s,t)$, where $s > t$ and $s+1$ is prime.  If $\calQ$ has an automorphism of order $s+1$, then
 \[ s \left\lceil \left\lceil \frac{t^2}{s+1}\right\rceil\left(\frac{s+1}{t} \right) \right\rceil \le t(s+t).\]
\end{thm}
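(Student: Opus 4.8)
The plan is to determine the fixed substructure of an order-$(s+1)$ automorphism $\theta$, show it forces a sizeable partial spread of fixed lines, extract a grid from it by pigeonhole, and finally bound that grid's number of rows from above using the absence of triangles in $\calQ$.

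First I would prove that $\theta$ has no fixed points and that the fixed lines form a partial spread. Since $s+1$ is prime, every $\theta$-orbit on points or on lines has size $1$ or $s+1$. If $P$ were a fixed point, then no line through $P$ could lie in an orbit of size $s+1$, since such a line would force $P$ onto all $s+1$ lines of that orbit, contradicting that $P$ lies on only $t+1<s+1$ lines; so every line through $P$ is fixed, and a fixed line through a fixed point is fixed pointwise (a prime-order permutation of its $s+1$ points having a fixed point is trivial). Hence every point collinear with $P$ is fixed, and by connectivity of the collinearity graph of $\calQ$ all of $\calQ$ is fixed, i.e.\ $\theta=1$ --- a contradiction. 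It follows that each fixed line is permuted as a single $(s+1)$-cycle on its points and that two fixed lines cannot be concurrent (their intersection would be a fixed point), so the fixed lines form a partial spread $\mathcal{S}$. Counting lines modulo $s+1$, $|\mathcal{S}|\equiv(t+1)(st+1)\equiv 1-t^2\pmod{s+1}$, and since $2\le t\le s-1$ together with $s+1$ prime forces $t^2\not\equiv 1\pmod{s+1}$, I get $|\mathcal{S}|\ge(s+1)\lceil t^2/(s+1)\rceil-t^2+1\ge1$ (the degenerate case $(s+1)\mid t^2$, where the target inequality reduces to $0\le t^2$, I would dispatch separately).

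Next I would fix a line $\ell\in\mathcal{S}$ and analyse the $(s+1)t$ lines meeting $\ell$, which fall into $t$ orbits of size $s+1$, each transversal to $\ell$. Using the GQ axiom together with the absence of triangles, one checks that every fixed line other than $\ell$ is transversal to exactly one of these $t$ orbits, and that the fixed lines transversal to a given orbit $\mathcal{R}$, together with $\ell$, are pairwise disjoint while each meets every line of $\mathcal{R}$; thus they form the rows of a grid $\mathcal{G}$ whose columns are the $s+1$ lines of $\mathcal{R}$. By pigeonhole some orbit $\mathcal{R}$ receives at least $\lceil(|\mathcal{S}|-1)/t\rceil$ of the remaining fixed lines, so $\mathcal{G}$ has $N\ge\lceil(|\mathcal{S}|-1)/t\rceil+1\ge\lceil((s+1)\lceil t^2/(s+1)\rceil-t^2)/t\rceil+1$ rows. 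Since $\lceil\lceil t^2/(s+1)\rceil(s+1)/t\rceil=t+\lceil((s+1)\lceil t^2/(s+1)\rceil-t^2)/t\rceil$, this reads exactly $N\ge\lceil\lceil t^2/(s+1)\rceil(s+1)/t\rceil-t+1$, so it will suffice to establish the upper bound $s(N-1)\le t^2$.

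The hard part is that last bound on $N$. Here one must exploit that $\mathcal{G}$ is $\theta$-invariant --- its $s+1$ columns forming one $\theta$-orbit, its $N$ rows being fixed --- while $\theta$ is fixed-point free, so each row of $\mathcal{G}$ is itself a single point-orbit. For each row $m$ and each point $x\in m$, the no-triangle property forces every line through $x$ other than $m$ and the column at $x$ to miss all other columns and all other rows; hence the $(s+1)(t-1)$ such lines through $m$ split into exactly $t-1$ additional $\theta$-orbits transversal to $m$, and these $N(t-1)$ auxiliary orbits are pairwise distinct. I would then track how the $N(s+1)$ points of $\mathcal{G}$, the orbit $\mathcal{R}$, and these auxiliary line-orbits sit among the $st+1$ point-orbits and the line-orbits of $\calQ$, and feed the resulting count into the global incidence identities of $\calQ$ --- where Benson's trace formula for $\theta$ (which has no fixed points) may be needed to pin down the relevant residues --- to obtain $s(N-1)\le t^2$. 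Combined with the previous paragraph, $s\lceil\lceil t^2/(s+1)\rceil(s+1)/t\rceil\le s(N+t-1)=s(N-1)+st\le t^2+st=t(s+t)$, as required. The delicate point, and the likely source of difficulty, is extracting precisely $s(N-1)\le t^2$ from the interaction of these auxiliary orbits with $\mathcal{G}$, rather than merely the weaker $N\le s+1$ that the grid structure yields on its own.
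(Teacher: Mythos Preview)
Your reduction is exactly right up through the point where you need $s(N-1)\le t^2$: the fixed substructure is of type $(1')$, the pigeonhole step is correct, and the algebraic manipulation showing that $s(N-1)\le t^2$ suffices is fine. (Your direct argument that $\theta$ is fixed-point free is in fact tidier than the paper's case-by-case elimination of the types in Lemma~\ref{lem:fixedtypes}.)

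The gap is in the last step. The inequality $s(N-1)\le t^2$ has nothing to do with $\theta$: it is a purely combinatorial fact about generalized quadrangles, namely Payne's inequality (the Higman--Sims eigenvalue bound, Lemma~\ref{lem:PayneIneq}; see \cite{PayneIneq} or \cite[1.4.1]{fgq}). In its dual form it states that if $X$ and $Y$ are disjoint sets of pairwise nonconcurrent lines with $X\subseteq Y^\perp$, then $(|X|-1)(|Y|-1)\le t^2$. Here $X=\mathcal{R}$ (the $s+1$ column lines, which are pairwise nonconcurrent since any two meet $\ell$ at distinct points and a further intersection would create a triangle) and $Y$ is your set of $N$ fixed rows; the bound is immediate. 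Your proposed route through auxiliary $\theta$-orbits and Benson's formula is chasing the wrong invariant: Benson yields a congruence modulo $s+t$, and orbit counts yield congruences modulo $s+1$, and neither mechanism can produce the sharp spectral inequality you need. Once you replace that final paragraph with a one-line appeal to Payne's inequality, the proof is complete and coincides with the paper's.
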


If $\calQ$ is a generalized quadrangle of order $(s,t)$, where $s,t$ satisfy both the hypotheses and the inequality of Theorem \ref{thm:main}, then we cannot say much.  The real strength of this result arises in the situation where $s,t$ satisfy all of the numerical constraints of Theorem \ref{thm:main} except for the inequality, in which case we can make the following conclusion:

\begin{cor}
 \label{cor:notpointtrans}
 Let $\calQ$ be a thick generalized quadrangle of order $(s,t)$, where $s > t$ and $s+1$ is prime.  If 
 \[ s \left\lceil \left\lceil \frac{t^2}{s+1}\right\rceil\left(\frac{s+1}{t} \right) \right\rceil > t(s+t),\]
 then $\calQ$ does not have an automorphism of order $s+1$ and the automorphism group of $\calQ$ cannot be point-transitive.
\end{cor}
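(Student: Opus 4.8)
The corollary is the contrapositive of Theorem~\ref{thm:main} together with an application of Cauchy's theorem, so I would argue as follows. First, suppose the displayed inequality \emph{fails} for the given $\calQ$; if $\calQ$ nonetheless had an automorphism of order $s+1$, then, since $\calQ$ is a thick generalized quadrangle of order $(s,t)$ with $s>t$ and $s+1$ prime, Theorem~\ref{thm:main} would force $s\lceil\lceil t^2/(s+1)\rceil(s+1)/t\rceil\le t(s+t)$, a contradiction. Hence $\calQ$ has no automorphism of order $s+1$. Second, I would deduce the intransitivity statement: if $G\le\Aut(\calQ)$ were transitive on $\calP$, then by the orbit--stabilizer theorem $|\calP|$ divides $|G|$, and $|\calP|=(s+1)(st+1)$; as $s+1$ is prime, $s+1\mid|G|$, so by Cauchy's theorem $G$ contains an element of order $s+1$, i.e.\ $\calQ$ has an automorphism of order $s+1$ --- impossible by the first part. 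Therefore $\Aut(\calQ)$ cannot be point-transitive.

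This reduces everything to Theorem~\ref{thm:main}, whose proof carries all the weight; I sketch how I would approach it. Let $\sigma$ be an automorphism of prime order $p=s+1$. Since $s>t$ gives $t+1\le s<p$, a fixed point cannot have a $\sigma$-orbit of lines of length $p$, so every line through a fixed point is fixed; and a fixed line through a fixed point has at least one fixed point among its $p$ points, hence (only $p-1$ remaining, too few for a $p$-cycle) all $p$ of them fixed. By connectivity of the collinearity graph, a single fixed point propagates to every point, forcing $\sigma=1$; thus $\sigma$ is fixed-point-free on points. Consequently no line is totally fixed, each fixed line carries a single $p$-cycle of points, and two fixed lines cannot meet (their intersection point would be fixed). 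So the fixed lines form a partial spread $\mathcal{F}$ with $m:=|\mathcal{F}|\le st+1$. Counting lines modulo $p$, with $s\equiv-1\pmod p$, gives $|\calL|=(t+1)(st+1)\equiv 1-t^2\pmod{s+1}$, and since the moved lines lie in $p$-orbits, $m\equiv 1-t^2\pmod{s+1}$; equivalently $(s+1)\mid m+t^2-1$, so $m\ge(s+1)\lceil t^2/(s+1)\rceil-(t^2-1)$ whenever $m>0$.

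The remaining, and hardest, step is to upgrade the crude partial-spread bound $m\le st+1$ to the sharp inequality with nested ceilings. I would try a refined incidence count: each fixed line $\ell_0$ meets exactly $(s+1)t$ moved lines --- the $t$ non-$\ell_0$ lines through each of its $p$ points --- and these fall into $t$ $\sigma$-orbits of size $p$, each such orbit containing one line through each point of $\ell_0$; dually, a moved line meets at most $s+1$ fixed lines. Tracking how the point-orbits \emph{not} covered by $\mathcal{F}$ distribute across these line-orbits, and feeding in the residue constraint $m\equiv 1-t^2\pmod{s+1}$, should force a lower bound on the size of the configuration that, after dividing through by $t$ and again rounding up, reproduces $s\lceil\lceil t^2/(s+1)\rceil(s+1)/t\rceil\le t(s+t)$ --- the two ceilings recording integrality invoked at two stages. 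The main obstacle I anticipate is precisely this bookkeeping: choosing the right object to count so that the partial-spread structure of $\mathcal{F}$, the congruence on $m$, and the arithmetic $s\equiv-1\pmod{s+1}$ combine into exactly that inequality rather than something weaker. One may also have to handle separately the degenerate case $m=0$ --- which occurs precisely when $s+1\mid t^2-1$ --- by checking that the inequality still follows from a local count around a point lying on no fixed line.
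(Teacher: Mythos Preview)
Your deduction of the corollary from Theorem~\ref{thm:main} is correct and is exactly the paper's argument: orbit--stabilizer gives $(s+1)\mid |G|$, Cauchy produces an element of order $s+1$, and Theorem~\ref{thm:main} supplies the contradiction.

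Your sketch of Theorem~\ref{thm:main}, however, misses the decisive ingredient. You correctly establish that the fixed substructure has type~$(1')$ (no fixed points, fixed lines pairwise nonconcurrent) and that $m:=\beta_0(x)$ satisfies $m\equiv 1-t^2\pmod{s+1}$, hence $m\ge (s+1)\lceil t^2/(s+1)\rceil-(t^2-1)$. What you are missing is the \emph{upper} bound, and this is not obtained by a refined incidence count of the sort you describe. The paper's key step is: fix a line $\ell\in\mathcal{F}$, use Pigeonhole on the $t$ $\langle x\rangle$-orbits of lines concurrent with $\ell$ to find one orbit $X$ (of size $s+1$) that is concurrent with at least $\lceil (m-1)/t\rceil+1$ fixed lines $Y$, and then apply Payne's inequality (Lemma~\ref{lem:PayneIneq}, proved via the Higman--Sims eigenvalue technique) to the pair $(X,Y)$ of mutually nonconcurrent sets with $X\subseteq Y^\perp$, yielding $s\lceil (m-1)/t\rceil\le t^2$. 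This algebraic bound is what produces the nested-ceiling inequality; elementary counting of the kind you propose does not obviously reach it, because Payne's inequality is genuinely a spectral fact about the collinearity graph, not a consequence of local incidence arithmetic.

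A minor point: the case $m=0$ does not arise. Under the standing hypotheses $s>t>1$, one has $st+1\equiv 1-t\not\equiv 0\pmod{s+1}$ and $t+1\not\equiv 0\pmod{s+1}$, so type~$(0)$ is excluded by Lemma~\ref{lem:type0cong}; thus $m\ge 1$ always, and no separate degenerate analysis is needed.
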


The \textit{dual} of a generalized quadrangle $\calQ$ with point set $\calP$ and line set $\calL$ comes from switching the role of points and lines to create a new generalized quadrangle $\calQ^\prime$ with point set $\calL$ and line set $\calP$.  Viewed through the lens of the dual quadrangle, Corollary \ref{cor:notpointtrans} can be rephrased to obtain results about the line-transitivity of certain potential generalized quadrangles.

\begin{cor}
 \label{cor:notlinetrans}
 Let $\calQ$ be a thick generalized quadrangle of order $(s,t)$, where $t > s$ and $t+1$ is prime.  If 
 \[ t \left\lceil \left\lceil \frac{s^2}{t+1}\right\rceil\left(\frac{t+1}{s} \right) \right\rceil > s(s+t),\]
 then $\calQ$ does not have an automorphism of order $t+1$ and the automorphism group of $\calQ$ cannot be line-transitive.
\end{cor}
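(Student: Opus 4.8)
The plan is to obtain Corollary \ref{cor:notlinetrans} as a formal consequence of Corollary \ref{cor:notpointtrans} applied to the dual generalized quadrangle. First I would recall the basic fact that if $\calQ$ is a thick generalized quadrangle of order $(s,t)$, then its dual $\calQ'$ is a thick generalized quadrangle of order $(t,s)$: the two GQ axioms are self-dual and thickness is symmetric in $s$ and $t$. Moreover, every automorphism of $\calQ$ --- a permutation of $\calP$ preserving collinearity --- induces a permutation of $\calL$ preserving concurrency, and conversely, so that $\Aut(\calQ)$ and $\Aut(\calQ')$ may be identified as one group acting compatibly on both $\calP$ and $\calL$; in particular $\calQ$ has an automorphism of order $t+1$ if and only if $\calQ'$ does, and $\Aut(\calQ)$ is line-transitive on $\calQ$ if and only if $\Aut(\calQ')$ is point-transitive on $\calQ'$.

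With this dictionary in place the proof is a direct substitution. Suppose $\calQ$ has order $(s,t)$ with $t > s$ and $t+1$ prime, and assume the displayed inequality
\[ t \left\lceil \left\lceil \frac{s^2}{t+1}\right\rceil\left(\frac{t+1}{s} \right) \right\rceil > s(s+t). \]
Then $\calQ'$ is a thick generalized quadrangle of order $(t,s)$; writing $(s^*,t^*) = (t,s)$ for its parameters, we have $s^* > t^*$ and $s^*+1 = t+1$ prime, and the hypothesis above is exactly the inequality
\[ s^* \left\lceil \left\lceil \frac{(t^*)^2}{s^*+1}\right\rceil\left(\frac{s^*+1}{t^*} \right) \right\rceil > t^*(s^*+t^*) \]
of Corollary \ref{cor:notpointtrans} (note $t^*(s^*+t^*) = s(s+t)$). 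Hence Corollary \ref{cor:notpointtrans} applies to $\calQ'$ and yields that $\calQ'$ has no automorphism of order $s^*+1 = t+1$ and that $\Aut(\calQ')$ is not point-transitive. Translating back through the duality dictionary, $\calQ$ has no automorphism of order $t+1$ and $\Aut(\calQ)$ is not line-transitive, as claimed.

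The only content beyond bookkeeping is the identification of $\Aut(\calQ)$ with $\Aut(\calQ')$ and of line-transitivity with dual point-transitivity, and I expect this to be the main (though routine) point to justify carefully, since in the introduction the automorphism group was defined solely via its action on points. One clean way to handle it: given a collineation $g$ of $\calQ$, define its action on a line $\ell$ by sending $\ell$ to the unique line through the images of any two distinct points of $\ell$; the point-line incidence axiom guarantees this is well defined and that $g$ preserves incidence, so $g$ is an automorphism of $\calQ'$, and applying the same construction to $\calQ'$ recovers the original action on $\calP$. This gives a bijective correspondence $\Aut(\calQ) \leftrightarrow \Aut(\calQ')$ respecting orders of elements and interchanging the two notions of transitivity, which is all that the substitution argument requires.
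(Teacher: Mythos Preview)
Your proposal is correct and follows exactly the same approach as the paper, which proves this corollary in a single line by invoking point-line duality and Corollary~\ref{cor:notpointtrans}. Your additional justification of the identification $\Aut(\calQ)\cong\Aut(\calQ')$ and the interchange of line-transitivity with dual point-transitivity is a welcome elaboration of what the paper leaves implicit.
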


At first glance, the inequalities listed above seem to be rather weak.  However, the following corollaries show the power of the result when one parameter is (relatively speaking) much bigger than the other.

\begin{cor}
 \label{cor:moreexplicit}
 Let $\calQ$ be a thick generalized quadrangle of order $(s,t)$.  If $s+1$ is a prime and if there exists a natural number $n$ such that
 \[ \frac{t^2}{n+1} + t \le s + 1 < \frac{t^2}{n},\]
 then $\calQ$ cannot have an automorphism of order $(s+1)$ and cannot have a point-transitive group of automorphisms.
\end{cor}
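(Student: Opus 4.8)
The plan is to derive Corollary~\ref{cor:moreexplicit} directly from Corollary~\ref{cor:notpointtrans}: since $s+1$ is assumed prime, it suffices to check that the existence of a natural number $n$ with $\frac{t^2}{n+1}+t \le s+1 < \frac{t^2}{n}$ forces both $s>t$ and the reversed inequality $s\lceil\lceil t^2/(s+1)\rceil(s+1)/t\rceil > t(s+t)$. First I would record two elementary consequences of the sandwich condition. Subtracting the bounds gives $\frac{t^2}{n}-\frac{t^2}{n+1} > t$, i.e. $t > n(n+1)$; in particular $n+1 \le t$ (and so $t\ge 3$), whence $\frac{t^2}{n+1} \ge t$ and therefore $s+1 \ge \frac{t^2}{n+1}+t \ge 2t$. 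Since $\calQ$ is thick, $t\ge 2$, so $s \ge 2t-1 > t$, which is the first hypothesis of Corollary~\ref{cor:notpointtrans}.

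Next I would verify the reversed inequality by pushing a clean integer bound through the two nested ceilings. From $s+1 < \frac{t^2}{n}$ we get $\frac{t^2}{s+1} > n$, so the inner ceiling satisfies $\lceil t^2/(s+1)\rceil \ge n+1$. From $s+1 \ge \frac{t^2}{n+1}+t$ we get $(n+1)(s+1) \ge t^2+(n+1)t$, hence $(n+1)\cdot\frac{s+1}{t} \ge t+n+1$. Multiplying the first bound by $\frac{s+1}{t}>0$ and combining yields $\lceil t^2/(s+1)\rceil\cdot\frac{s+1}{t} \ge t+n+1$, and since $t+n+1$ is a positive integer the outer ceiling is also at least $t+n+1$. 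Thus the left-hand side is at least $s(t+n+1)=st+s(n+1)$. Finally, $s \ge \frac{t^2}{n+1}+t-1$ gives $s(n+1) \ge t^2+(n+1)(t-1) > t^2$, using $(n+1)(t-1)\ge 2>0$; hence $s(t+n+1) > st+t^2 = t(s+t)$, as needed. Corollary~\ref{cor:notpointtrans} then applies and gives that $\calQ$ has no automorphism of order $s+1$ and that $\Aut(\calQ)$ is not point-transitive.

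This argument is essentially bookkeeping with the two given inequalities, so there is no genuine obstacle; the only points requiring care are (i) confirming that each inequality that needs to be strict really is — in particular the strictness of $s(n+1)>t^2$, which is exactly why the lower bound in the hypothesis carries the extra summand $t$ — and (ii) checking that $\lceil t^2/(s+1)\rceil$ lands at $n+1$ rather than merely $n$, which relies on the strictness of the upper bound $s+1 < t^2/n$. Everything else is forced once $s>t$ is established.
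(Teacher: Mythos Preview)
Your proof is correct and follows essentially the same approach as the paper's: verify $s>t$, evaluate the inner ceiling as (at least) $n+1$, use the lower bound $(n+1)(s+1)\ge t^2+(n+1)t$ to push through the outer ceiling, and conclude $s(t+n+1)>t(s+t)$ via $(n+1)s>t^2$. The only cosmetic differences are that you reach $s>t$ through the observation $t>n(n+1)$ (yielding the slightly stronger $s\ge 2t-1$) and that you use $\lceil t^2/(s+1)\rceil\ge n+1$ rather than proving equality; neither changes the substance of the argument.
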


\begin{cor}
 \label{cor:q^2-q}
 If $\calQ$ is a generalized quadrangle of order $(q^2 - nq, q)$, where $n$ and $q$ are positive integers with $2n < q$ and $q^2 - nq + 1$ is prime, then $\calQ$ cannot have an automorphism of order $q^2 - nq + 1$, and, moreover, $\calQ$ does not have a point-transitive group of automorphisms.  
\end{cor}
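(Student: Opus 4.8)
The plan is to deduce Corollary~\ref{cor:q^2-q} directly from Corollary~\ref{cor:notpointtrans}, applied with $s = q^2 - nq = q(q-n)$ and $t = q$, by checking each hypothesis in turn. First I would record the elementary consequences of $2n < q$ for positive integers $n$ and $q$: since $n \ge 1$ we have $q \ge 2n + 1 \ge 3$, hence $q - n \ge n + 1 \ge 2$ and $q - 2n \ge 1$. In particular $t = q > 1$ and $s = q(q-n) \ge 6 > 1$, so $\calQ$ is thick; also $s > t$ because $q - n > 1$; and $s + 1 = q^2 - nq + 1$ is prime by hypothesis. Thus all the standing assumptions of Corollary~\ref{cor:notpointtrans} are met.

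Next I would evaluate the two nested ceilings. For the inner one, $\lceil t^2/(s+1) \rceil = 2$: indeed $q^2/(q^2-nq+1) > 1$ since $nq \ge 3 > 1$, and $q^2/(q^2-nq+1) \le 2$ since $q^2 - 2nq + 2 = q(q-2n) + 2 \ge q + 2 > 0$ using $q - 2n \ge 1$. For the outer ceiling, $2(s+1)/t = 2(q^2-nq+1)/q = 2q - 2n + 2/q$, and because $2q - 2n$ is an integer while $0 < 2/q < 1$ (as $q \ge 3$), we get $\lceil 2(s+1)/t\rceil = 2q - 2n + 1$. Hence the left-hand side of the inequality in Corollary~\ref{cor:notpointtrans} equals $s(2q-2n+1) = q(q-n)(2q-2n+1)$, while the right-hand side is $t(s+t) = q^2(q-n+1)$.

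It then remains to verify the strict inequality $q(q-n)(2q-2n+1) > q^2(q-n+1)$. Cancelling a factor of $q$ and expanding both sides, the difference (left minus right) simplifies to $q^2 - 3nq + 2n^2 - n = (q-n)(q-2n) - n$. Since $q - n \ge n + 1$ and $q - 2n \ge 1$, this is at least $(n+1)\cdot 1 - n = 1 > 0$, so the inequality holds strictly, and Corollary~\ref{cor:notpointtrans} applies: $\calQ$ has no automorphism of order $q^2 - nq + 1$ and no point-transitive automorphism group.

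I expect the whole argument to be essentially a bookkeeping exercise, with the one point needing care being that the final inequality $(q-n)(q-2n) > n$ is tight at the boundary: it holds (barely) when $q = 2n + 1$, giving $n + 1 > n$, but fails when $q = 2n$, so the hypothesis $2n < q$ cannot be weakened. I would also note that this boundary case is not reached by a naive appeal to Corollary~\ref{cor:moreexplicit}, whose auxiliary natural number would be forced to equal $1$ and whose required bound $t^2/2 + t \le s + 1$ fails precisely when $q = 2n + 1$; running the argument uniformly through Corollary~\ref{cor:notpointtrans} avoids this nuisance.
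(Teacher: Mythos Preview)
Your proof is correct and follows essentially the same approach as the paper's: verify the hypotheses of Corollary~\ref{cor:notpointtrans}, compute the inner and outer ceilings as $2$ and $2q-2n+1$ respectively, and then check the resulting strict inequality. Your algebraic verification via $(q-n)(q-2n)-n \ge 1$ is in fact cleaner than the paper's chain of inequalities through the intermediate bound $q^3$ (which, as written, contains typos and does not quite hold for all admissible $n$), and your remark on the tightness at $q=2n+1$ and the inadequacy of Corollary~\ref{cor:moreexplicit} there is a useful addition.
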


In particular, Corollary \ref{cor:q^2-q} shows that a generalized quadrangle of order $(q^2 - q, q)$, where $q^2 - q + 1$ is prime, is not point-transitive.  Very little is known about such (potential) generalized quadrangles; see \cite{Ovoids}.  Indeed, these results, while far from definitive, make it increasingly unlikely that such generalized quadrangles exist, since they would be very asymmetric.

We are further able to use the theory that we have built up to study the automorphism groups of potential generalized quadrangles of order $(4,12)$.  The best known result thus far comes from \cite{Ovoids}, which states that if such a generalized quadrangle contains an \textit{ovoid}, a set of $st + 1$ pairwise noncollinear points, then it cannot be point-transitive.  We are able to say considerably more:

\begin{thm}
 \label{thm:412}
 If $\calQ$ is a generalized quadrangle of order $(4,12)$, then the automorphism group of $\calQ$ cannot be transitive on either points or lines.
\end{thm}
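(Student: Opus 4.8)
The plan treats the two transitivity claims in turn. \emph{Line-transitivity} is immediate from Corollary~\ref{cor:notlinetrans} with $s=4$, $t=12$: one checks $t>s$, that $t+1=13$ is prime, and that
\[ t\left\lceil\left\lceil\frac{s^2}{t+1}\right\rceil\left(\frac{t+1}{s}\right)\right\rceil = 12\left\lceil\left\lceil\frac{16}{13}\right\rceil\cdot\frac{13}{4}\right\rceil = 12\cdot 7 = 84 > 64 = s(s+t), \]
so $\calQ$ has no automorphism of order $13$ and $\Aut(\calQ)$ is not line-transitive. For \emph{point-transitivity}, suppose $G:=\Aut(\calQ)$ is transitive on the $(s+1)(st+1)=245=5\cdot7^2$ points. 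Then $5\cdot7^2\mid|G|$, so $G$ contains elements of orders $5$ and $7$ and a Sylow $7$-subgroup $S$ of order divisible by $49$. The idea is to use the machinery built up earlier — the restriction on point- and line-orbit lengths of a prime-order automorphism, the Benson-type congruence $(t+1)f+g\equiv st+1\pmod{s+t}$ together with its bound on the relevant eigenvalue multiplicity, and the classification of the fixed substructure of a collineation — to pin down automorphisms of orders $7$ and $5$, and then combine this with transitivity.

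First consider order $7$. Since a line carries $s+1=5$ points and $7\nmid 5$, every fixed line of a $7$-element $\phi$ is fixed pointwise; since a point lies on $t+1=13$ lines and $13=7+6$, a fixed point of $\phi$ lies on $13$ or $6$ fixed (hence full) lines. A fixed point would therefore force $\calQ_\phi$ to be a thick subquadrangle of order $(4,t')$ with $t'\in\{5,12\}$; but $t'=12$ gives $\phi=1$, and $t'=5$ violates the subquadrangle inequality $st'\le t$. Hence every order-$7$ automorphism of $\calQ$ is fixed-point-free, and (a fixed line being full) fixed-line-free. As no nontrivial $7$-element of $G$ fixes a point, analysing the $S$-orbits on the $245$ points — each of $7$-power length at most $49$, with $S$-point-stabilizers that would otherwise contain a point-fixing $7$-element — forces $|S|=49$ and $S$ to act semiregularly, with $5$ orbits on points and $13$ orbits on lines.

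Now consider order $5=s+1$. As in the proof of Theorem~\ref{thm:main}, a fixed line of a $5$-element $\theta$ is full or pointless, a fixed point lies on $13$, $8$, or $3$ (full) fixed lines, and a line in a $5$-orbit carries at most one fixed point. If $\theta$ fixes a point, the fixed-substructure classification together with $st'\le t$ forces $\calQ_\theta$ to be a subquadrangle $\calQ'$ of order $(4,2)$; a flag count over pairs $(x,y)$ with $y\in\calQ'$ and $x\sim y$ outside $\calQ'$ then shows that each of the $200$ points outside $\calQ'$ is collinear with exactly $s't'+1=9$ points of $\calQ'$, i.e.\ subtends an ovoid of the generalized quadrangle of order $(4,2)$. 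The remaining work is to derive a contradiction: first, eliminate this configuration — using the combinatorics of ovoids of the order-$(4,2)$ quadrangle, sharpened by the fact that $\theta$ fixes $\calQ'$ pointwise (so the $200$ external points fall into $\langle\theta\rangle$-orbits of size $5$ subtending a common ovoid) and by the semiregular Sylow $7$-subgroup $S$ above — so that no order-$5$ automorphism fixes a point; second, rule out a fixed-point-free order-$5$ automorphism as well, via the Benson congruence, its dual, and the constraint that the partial spread of pointless fixed lines has size $\equiv 2\pmod 5$. With both orders handled, the surviving orbit structures of a $7$-element and a $5$-element of $G$ are incompatible with $G$ being transitive on the $245$ points, which is the final contradiction.

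The main obstacle is the order-$5$ analysis, and within it the subquadrangle alternative: whereas a fixed point of an order-$7$ automorphism is excluded outright by the subquadrangle inequality, an order-$5$ automorphism is not a priori forbidden from fixing a generalized quadrangle of order $(4,2)$, so one must either show that $\calQ$ admits no subquadrangle of order $(4,2)$ all of whose external points subtend ovoids, or that such a subquadrangle cannot coexist with a point-transitive automorphism group. Once this step is settled, the fixed-point-free case and the assembly of the final contradiction from the (by then severely restricted) orbit data are routine.
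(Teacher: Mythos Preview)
Your line-intransitivity argument and your order-$7$ analysis are fine (and essentially match the paper's). The proposal breaks down at the order-$5$ step and at the endgame.

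For order $5$ you correctly reduce a fixed point to $\calQ_\theta$ being a subquadrangle of order $(4,2)$, but you then embark on an unfinished ovoid argument to eliminate it. This is unnecessary: the paper's Lemma~\ref{lem:type4s=sprime} (Benson applied to a type~(4) substructure with $s'=s$) gives the divisibility $s+t\mid st'(st+1)$, i.e.\ $16\mid 8\cdot 49=392$, which fails. So the $(4,2)$ case dies in one line, and every order-$5$ automorphism is fixed-point-free (type~$(1')$); there is no ``main obstacle'' here.

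The real gap is the endgame. You propose to ``rule out a fixed-point-free order-$5$ automorphism as well'' via Benson and its dual, but that does not work: for such $\theta$ one gets $\alpha_0(\theta)=0$, $\alpha_1(\theta)\equiv 1\pmod{16}$, $\alpha_1(\theta)\equiv 0\pmod 5$, hence $\alpha_1(\theta)\in\{65,145,225\}$, and the line side is similarly underdetermined---there is no contradiction. The paper does \emph{not} eliminate order-$5$ elements. Instead it proves two group-theoretic facts you do not mention: (i) a point-transitive $G$ cannot be quasiprimitive on $\calP$ (using the bound $p\le 7$, the Sylow bounds $|G|_5\le 5$, $|G|_7\le 49$, and the classification of simple groups with $\pi(G)\subseteq\{2,3,5,7\}$), and (ii) by the Frattini argument applied to an intransitive normal subgroup, $G$ then contains an element $h$ of order $35$. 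Benson for this $h$ gives $\alpha_0(h)=0$, $\alpha_1(h)\equiv 1\pmod{16}$, and since both $h^5$ and $h^7$ act semiregularly, every $\langle h\rangle$-orbit has size $35$, so $\alpha_1(h)\equiv 0\pmod{35}$; the Chinese Remainder Theorem forces $\alpha_1(h)\ge 385>245=|\calP|$, the contradiction. Your vague ``incompatibility of surviving orbit structures'' is not a substitute for producing this order-$35$ element.
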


While there are certainly ``regular'' combinatorial structures that are asymmetric, Theorem \ref{thm:412} makes it much more unlikely that such a generalized quadrangle exists.  Moreover, it is likely that the techniques used in the proof of Theorem \ref{thm:412} can be used to prove that the automorphism groups of other potential generalized quadrangles cannot be point- or line-transitive.  

This paper is organized as follows.  In Section \ref{sect:background}, we provide the background material necessary for the rest of the paper.  In Section \ref{sect:autprime}, we prove various preliminary results about automorphisms of prime order of generalized quadrangles.  Section \ref{sect:proofineq} is dedicated to the proof of Theorem \ref{thm:main}, and Section \ref{sect:conseq} is dedicated to the consequences of Theorem \ref{thm:main} and, in particular, contains proofs of Corollaries \ref{cor:notpointtrans}, \ref{cor:moreexplicit}, and \ref{cor:q^2-q}.  Section \ref{sect:412} contains results that apply specifically to generalized quadrangles of order $(4,12)$, culminating in the proof of Theorem \ref{thm:412}.  Finally, we include in Appendix \ref{app:calc} some tables which list all possible orders $(s,t)$ with $t \le 100$ to which Corollary \ref{cor:notpointtrans} applies.

\section{Background}
\label{sect:background}
Let $\calQ$ be a generalized quadrangle with point set $\calP$ and line set $\calL$.  We say that two points $P,P^\prime$ are \textit{collinear} if there is a line incident with both $P$ and $P^\prime$, in which case we write $P \sim P^\prime$.  Similarly, we say that two lines $\ell, \ell^\prime$ are \textit{concurrent} if there is a point incident with both $\ell$ and $\ell^\prime$, and we write $\ell \sim \ell^\prime$.  Our convention here will be that $P \sim P$.  Given a set $X$ of points,
\[ X^\perp := \{P \in \calP : P \sim Q \text{ for all } Q \in X\}.\]
We define $Z^\perp$ for a set $Z$ of lines analogously.  If the role of ``point'' and of ``line'' (as well as the values of $s$ and $t$) are interchanged for $\calQ$, then the result is also a generalized quadrangle and is called the \textit{dual} of $\calQ$.

A \textit{grid} is an incidence structure $(\calP, \calL, \I)$ such that for some integers $s_1, s_2 \in \N$ we have
\[\calP = \{ P_{i,j} : 0 \le i \le s_1, 0 \le j \le s_2\}, \; \calL = \{\ell_0, \dots, \ell_{s_1}, \ell_0^\prime, \dots, \ell_{s_2}^\prime\}\] 
with incidence defined by $P_{i,j} \I \ell_k$ if and only if $i = k$ and $P_{i,j} \I \ell_k^\prime$ if and only if $j = k$.  A \textit{dual grid} is the point-line dual of a grid, and, instead of $s_1$ and $s_2$, it is defined in terms of parameters $t_1$ and $t_2$.  It is easy to see that a grid is a generalized quadrangle if and only if $s_1 = s_2$, and the generalized quadrangles with $t = 1$ are precisely the grids with $s_1 = s_2 (= s)$.  The dual result holds for dual grids.

The following omnibus lemma contains basic results about the parameters $s$ and $t$.

\begin{lem}\cite[1.2.1, 1.2.2, 1.2.3, 1.2.5]{fgq}
\label{lem:GQbasics}
Let $\mathcal{Q}$ be a finite generalized quadrangle of order $(s,t)$.  Then the following hold:
\begin{itemize}
 \item[(i)] $|\mathcal{P}| = (s+1)(st + 1)$ and $|\mathcal{L}| = (t+1)(st+1)$;
 \item[(ii)] $s + t$ divides $st(s+1)(t+1)$;
 \item[(iii)] if $s, t > 1$, then $t \leqslant s^2$ and $s \leqslant t^2$;
 \item[(iv)] if $1 < s < t^2$, then $s \leqslant t^2 - t$, and if $1 < t < s^2$, then $t \leqslant s^2 - s$.
\end{itemize}
\end{lem}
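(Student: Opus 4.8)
The plan is to treat the four statements in turn, since (i) and (ii) are foundational counting results, (iii) is an analytic inequality, and (iv) is a number-theoretic refinement of (ii). For (i), I would fix a point $P$ and sort all points by their relation to $P$. By the first axiom $P$ lies on $t+1$ lines, each carrying $s$ points besides $P$, and these $s(t+1)$ neighbors are distinct since two points share at most one line. For a neighbor $R$, each of the $t$ lines through $R$ other than $PR$ carries (by the GQ axiom, which makes $R$ the unique projection of $P$ onto such a line) exactly $s$ points non-collinear with $P$; conversely every $Q \not\sim P$ meets a unique neighbor of $P$ on each of its $t+1$ lines, so $Q$ is counted $t+1$ times among the triples (neighbor, line, non-neighbor). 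Hence the number of non-neighbors is $\frac{s(t+1)\cdot t\cdot s}{t+1} = s^2 t$, and $|\calP| = 1 + s(t+1) + s^2 t = (s+1)(st+1)$; the value of $|\calL|$ follows from the point–line duality of $\calQ$.

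For (ii), I would first verify that the collinearity graph $\Gamma$ on $\calP$ (adjacency being distinct and collinear) is strongly regular with parameters $v = (s+1)(st+1)$, $k = s(t+1)$, $\lambda = s-1$, and $\mu = t+1$. Here $\lambda = s-1$ uses that a GQ contains no triangles (a point off the line $PP'$ collinear with both $P$ and $P'$ would violate uniqueness of projection), while $\mu = t+1$ is exactly the projection count already used in (i). The nontrivial eigenvalues of such a graph are the roots of $x^2 - (\lambda - \mu)x - (k - \mu) = 0$, which factor as $s-1$ and $-(t+1)$, whose difference is $s+t$. Combining $m_1 + m_2 = v - 1$ with the trace condition $k + m_1(s-1) - m_2(t+1) = 0$, I can solve for the multiplicity $m_1 = \frac{st(s+1)(t+1)}{s+t}$. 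Since $m_1$ is the dimension of an eigenspace it is a non-negative integer, and this forces $(s+t) \mid st(s+1)(t+1)$.

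Statement (iii) is the Higman inequality, and is where I expect the main difficulty. The plan is a variance (Cauchy–Schwarz) argument. Fix two non-collinear points $p, q$, let $\{p,q\}^\perp = \{z_0, \dots, z_t\}$ be their $t+1$ common neighbors — pairwise non-collinear by the no-triangle property — and let $W$ be the set of points collinear with neither $p$ nor $q$. For $w \in W$ put $n_w = |\{w\}^\perp \cap \{p,q\}^\perp|$. Double counting incidences between $W$ and the $z_i$ gives $\sum_{w \in W} n_w = s(t^2-1)$ (on each of the $t-1$ lines through $z_i$ other than $z_i p$ and $z_i q$, all $s$ further points lie in $W$), and counting against pairs gives $\sum_{w \in W}\binom{n_w}{2} = \binom{t+1}{2}(t-1)$ (two of the $t+1$ common neighbors of $z_i, z_j$ are $p, q \notin W$, while the rest lie in $W$, again by the absence of triangles). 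Feeding $\sum n_w^2 = \sum n_w + 2\sum\binom{n_w}{2}$ and the value of $|W|$ into $\left(\sum_{w\in W} n_w\right)^2 \le |W|\sum_{w \in W} n_w^2$ and simplifying collapses to $(s-1)\,t\,(s^2 - t) \ge 0$, hence $t \le s^2$; the dual bound $s \le t^2$ follows. The main obstacle here is the bookkeeping: correctly choosing $W$ as the summation domain and repeatedly invoking the no-triangle property to decide which neighbors and common neighbors land in $W$.

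Finally, for (iv) I would exploit (ii) directly. Reducing modulo $s+t$ and using $t \equiv -s$, one computes $st(s+1)(t+1) \equiv s^2(s^2-1) \pmod{s+t}$, so (ii) is equivalent to $(s+t) \mid s^2(s^2-1)$. Assume $1 < t < s^2$ but, for contradiction, $t > s^2 - s$; writing $s+t = s^2 + r$ with $1 \le r \le s-1$ and using $s^2 \equiv -r$ gives $s^2(s^2-1) \equiv r(r+1) \pmod{s+t}$, whence $(s+t) \mid r(r+1)$. But $0 < r(r+1) \le (s-1)s = s^2 - s < s^2 + r = s+t$, a contradiction. Hence $t \le s^2 - s$, and the dual statement $s \le t^2 - t$ follows identically.
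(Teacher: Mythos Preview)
The paper does not actually prove this lemma: it is quoted as background from \cite{fgq} with only a citation to the relevant sections, and no argument is reproduced. So there is nothing in the paper to compare against line by line.

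That said, your proof is correct and essentially reproduces the classical arguments one finds in \cite{fgq}. The count in (i) and the strongly regular graph eigenvalue computation in (ii) are exactly the standard ones. For (iii), your variance/Cauchy--Schwarz argument on the set $W$ of points collinear with neither $p$ nor $q$ is one of the two usual proofs of the Higman bound (the other being the eigenvalue/interlacing route); the bookkeeping you flag as the obstacle is handled correctly, and the simplification $|W|(s+t)-s^2(t^2-1)=t(s-1)(s^2-t)$ goes through. For (iv), reducing $st(s+1)(t+1)$ modulo $s+t$ to $s^2(s^2-1)$ and then writing $s+t=s^2+r$ with $1\le r\le s-1$ to force the contradiction $0<r(r+1)<s+t$ is exactly the intended trick.
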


The following result of Payne is an application of the so-called Higman-Sims technique and is crucial to the proof of Theorem \ref{thm:main}.  A more general result is actually proved in \cite{PayneIneq}, and a proof of just Lemma \ref{lem:PayneIneq} is given in \cite[1.4.1]{fgq}.

\begin{lem}\cite[Theorem I.2]{PayneIneq}
\label{lem:PayneIneq}
 Let $X$ and $Y$ be disjoint sets of pairwise noncollinear points of a generalized quadrangle $\calQ$ of order $(s,t)$ with $s > 1$ such that $|X| = m$, $|Y| = n$, and $X \subseteq Y^\perp$.  Then $(m-1)(n-1) \le s^2$.  Dually, if $X$ and $Y$ are disjoint sets of pairwise nonconcurrent lines of a generalized quadrangle $\calQ$ of order $(s,t)$ with $t > 1$ such that $|X| = m$, $|Y| =n$, and $X \subseteq Y^\perp$, then $(m-1)(n-1) \le t^2$.
\end{lem}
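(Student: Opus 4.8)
The plan is to prove the point version by the eigenvalue form of the Higman--Sims technique, and then deduce the dual statement by passing to the dual quadrangle. First I would record that the collinearity graph $\Gamma$ on $\calP$ (adjacency $=$ ``collinear and distinct'') is strongly regular with valency $k=s(t+1)$, with two collinear points having $\lambda=s-1$ common neighbours (precisely the remaining points on their unique common line) and two noncollinear points having $\mu=t+1$ common neighbours. A direct computation of the restricted eigenvalues---using that the discriminant $(\lambda-\mu)^2+4(k-\mu)$ collapses to $(s+t)^2$---shows that the eigenvalues of the adjacency matrix $A$ are $k=s(t+1)$, $r=s-1$, and $\theta=-(t+1)$. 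The single spectral fact I will actually use is that, since $-(t+1)\le s-1$, every vector $z$ orthogonal to the all-ones vector $\mathbf 1$ satisfies
\[ z^{\top}Az \le (s-1)\,z^{\top}z, \]
because such a $z$ decomposes into eigenvectors for $r$ and $\theta$ only.

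Next I would translate the hypotheses into linear algebra. Writing $\mathbf 1_X,\mathbf 1_Y$ for the characteristic vectors and $v=(s+1)(st+1)=|\calP|$, the fact that $X$ and $Y$ are cocliques gives $\mathbf 1_X^{\top}A\mathbf 1_X=\mathbf 1_Y^{\top}A\mathbf 1_Y=0$; the fact that every point of $X$ is collinear with every point of $Y$ gives $\mathbf 1_X^{\top}A\mathbf 1_Y=mn$; and disjointness gives $\mathbf 1_X^{\top}\mathbf 1_Y=0$. I would then feed the projections $\tilde u=\mathbf 1_X-\tfrac{m}{v}\mathbf 1$ and $\tilde v=\mathbf 1_Y-\tfrac{n}{v}\mathbf 1$, both orthogonal to $\mathbf 1$, into the displayed inequality. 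Applying it to every $z=\alpha\tilde u+\beta\tilde v$ shows that the $2\times2$ matrix $(s-1)\mathcal G-\mathcal A$ is positive semidefinite, where $\mathcal G$ is the Gram matrix of $(\tilde u,\tilde v)$ and $\mathcal A$ is the matrix of the form $z\mapsto z^{\top}Az$ in the basis $(\tilde u,\tilde v)$; in particular its determinant is nonnegative.

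The remaining work is bookkeeping: using $A\mathbf 1=k\mathbf 1$ one computes $\tilde u^{\top}A\tilde u=-m^2k/v$, $\tilde u^{\top}A\tilde v=mn(v-k)/v$, $\tilde u^{\top}\tilde u=m(v-m)/v$, and $\tilde u^{\top}\tilde v=-mn/v$, together with the symmetric expressions in $n$. Substituting these into $\det\big((s-1)\mathcal G-\mathcal A\big)\ge0$ and clearing the common positive factor, the expression collapses thanks to the two identities $k-r=st+1$ and $r+v-k=s(st+1)$; after dividing by the positive quantity $s^2-1$ (this is where $s>1$ is used) it reduces exactly to $(m-1)(n-1)\le s^2$. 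I expect the one genuinely delicate point to be the spectral input---verifying that $s-1$ and $-(t+1)$ are indeed the restricted eigenvalues, so that the key inequality $z^{\top}Az\le(s-1)z^{\top}z$ holds on $\{\mathbf 1\}^{\perp}$; everything afterwards is forced algebra. Finally, the dual statement follows by running the identical argument in the dual quadrangle (equivalently, on the concurrency graph of lines), whose collinearity graph is strongly regular with the roles of $s$ and $t$ interchanged, so that $s-1$ becomes $t-1$ and the bound becomes $(m-1)(n-1)\le t^2$; this case requires $t>1$.
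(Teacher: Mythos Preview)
Your proposal is correct and follows exactly the approach the paper points to: the paper does not supply its own proof of this lemma but cites \cite{PayneIneq} and \cite[1.4.1]{fgq}, explicitly noting that the result is ``an application of the so-called Higman--Sims technique,'' which is precisely the spectral argument on the strongly regular collinearity graph that you carry out. Your identification of the restricted eigenvalues $s-1$ and $-(t+1)$, the projection of $\mathbf 1_X,\mathbf 1_Y$ onto $\{\mathbf 1\}^\perp$, and the subsequent $2\times2$ positive-semidefiniteness argument are all standard and correct; the final algebraic reduction to $(m-1)(n-1)\le s^2$ indeed goes through after dividing by $s^2-1>0$.
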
 

Let $x$ be an automorphism of $\calQ$.  Given a point $P$ of $\calQ$, there are three possibilities: 

\begin{itemize}
 \item[(i)] $P^x = P$,
 \item[(ii)] $P^x \neq P$ but $P^x \sim P$,
 \item[(iii)] $P^x \not\sim P$.
\end{itemize}

With this in mind, we define $\calP_0(x)$ to be the set of points fixed by $x$, $\calP_1(x)$ to be the set of points that are not fixed by $x$ but are sent to collinear points, and $\calP_2(x)$ to be the set of points sent to noncollinear points by $x$.  For each $i$, we define $\alpha_i(x) := |\calP_i(x)|$.  For the lines of $\calQ$, we define $\calL_0(x)$, $\calL_1(x)$, and $\calL_2(x)$ analogously, and for each $i$ we define $\beta_i(x) := |\calL_i(x)|$. 

The following result is known as Benson's Lemma and is a fundamental result regarding automorphisms of generalized quadrangles.

\begin{lem}\cite[Lemma 4.3]{Benson}
\label{lem:Benson}
 Let $x$ be an automorphism of a finite generalized quadrangle of order $(s,t)$.  If $\alpha_0(x)$ denotes the number of points fixed by $x$ and $\alpha_1(x)$ denotes the number of points sent to distinct noncollinear points, then
 \[(t+1)\alpha_0(x) + \alpha_1(x) \equiv (st+1) \pmod {s+t}.\]
\end{lem}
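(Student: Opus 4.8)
The plan is to run the classical Higman--Sims / collineation-matrix eigenvalue argument on the collinearity graph. First I would form the graph $\Gamma$ on the point set $\calP$ in which two distinct points are adjacent exactly when they are collinear, and let $A$ be its adjacency matrix. The two GQ axioms force $\Gamma$ to be strongly regular: $\Gamma$ has no triangle other than those coming from three points of a common line, so two adjacent points have exactly the other $s-1$ points of their line as common neighbours ($\lambda = s-1$), while the unique-perpendicular axiom shows two nonadjacent points have exactly $t+1$ common neighbours, one on each line through either of them ($\mu = t+1$); the valency is $k = s(t+1)$. Solving the strongly-regular eigenvalue equations then gives that $A$ has eigenvalues $k$, $s-1$, and $-(t+1)$, so $\mathbb{R}^{|\calP|} = V_0 \oplus V_1 \oplus V_2$ is an orthogonal direct sum of eigenspaces with $V_0 = \langle \mathbf{1}\rangle$ the span of the all-ones vector.

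Next I would bring in the automorphism. Let $M$ be the permutation matrix of $x$ acting on $\calP$. Because $x$ preserves collinearity, $M$ commutes with $A$, hence $M$ stabilizes each eigenspace $V_i$; write $M_i := M|_{V_i}$, $a := \operatorname{tr}(M_1)$, $b := \operatorname{tr}(M_2)$. Since $M$ fixes $\mathbf{1}$ we have $\operatorname{tr}(M_0) = 1$. I would then compute two traces in two ways. Combinatorially, $\operatorname{tr}(M) = \alpha_0(x)$ counts fixed points, and $\operatorname{tr}(AM) = \alpha_1(x)$ counts points $P$ with $P \ne P^x$ and $P \sim P^x$. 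Spectrally, using that $A$ acts as the scalar $\theta_i$ on $V_i$, one gets $\operatorname{tr}(M) = 1 + a + b$ and $\operatorname{tr}(AM) = k + (s-1)a - (t+1)b$. Equating yields
\[
\alpha_0(x) = 1 + a + b, \qquad \alpha_1(x) = s(t+1) + (s-1)a - (t+1)b.
\]

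The last step is to eliminate $a$ and $b$ modulo $s+t$. The key identity is $-(t+1) \equiv s-1 \pmod{s+t}$, or more precisely $(s-1)a - (t+1)b = (s-1)(a+b) - (s+t)b$. Substituting $a+b = \alpha_0(x) - 1$ from the first equation into the second gives $\alpha_1(x) = s(t+1) + (s-1)\bigl(\alpha_0(x) - 1\bigr) - (s+t)b$. Since $\alpha_0(x)$, $\alpha_1(x)$, $s$, $t$ are all integers, this forces $(s+t)b \in \mathbb{Z}$, so reducing modulo $s+t$ gives $\alpha_1(x) \equiv (s-1)\alpha_0(x) + st + 1 \pmod{s+t}$. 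Finally $t+1 \equiv -(s-1) \pmod{s+t}$, so $(t+1)\alpha_0(x) \equiv -(s-1)\alpha_0(x)$, and the congruence rearranges to $(t+1)\alpha_0(x) + \alpha_1(x) \equiv st+1 \pmod{s+t}$, as claimed.

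The step needing the most care is exactly this elimination, because $a = \operatorname{tr}(M_1)$ and $b = \operatorname{tr}(M_2)$ need not individually be integers: they are sums of eigenvalues of $M$ (roots of unity) over eigenspaces that need not be defined over $\mathbb{Q}$, so one cannot simply reduce $a$ or $b$ mod $s+t$. The resolution is that their integrality is never used in isolation --- only the combination $(s+t)b$ enters the final identity, and that combination is forced to be an integer by the combinatorial (hence integral) meaning of $\operatorname{tr}(M)$ and $\operatorname{tr}(AM)$. I would arrange the bookkeeping so that this is manifest. Everything above dualizes verbatim to lines, though only the point statement is needed here.
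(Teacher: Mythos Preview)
The paper does not supply its own proof of this lemma; it simply quotes Benson with a citation. Your outline is precisely the classical eigenvalue argument that Benson's proof amounts to, so on the level of strategy there is nothing to compare.

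There is, however, a genuine gap in your final paragraph. You correctly identify that the delicate point is the integrality of $a$ and $b$, but the resolution you propose does not work. From the identity
\[
\alpha_1(x) = st+1 + (s-1)\bigl(\alpha_0(x)-1\bigr) - (s+t)b
\]
you deduce only that $(s+t)b \in \mathbb{Z}$, and then claim that this lets you reduce modulo $s+t$. It does not: if, say, $b = 1/(s+t)$ then $(s+t)b = 1$ is an integer but is certainly not $\equiv 0 \pmod{s+t}$, and the congruence would be off by $1$. Knowing $(s+t)b$ is an integer is strictly weaker than knowing it is a multiple of $s+t$.

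The correct fix is that your worry is unfounded: $a$ and $b$ \emph{are} integers. The eigenvalues of $A$ are the integers $s(t+1)$, $s-1$, $-(t+1)$, so each eigenspace $V_i = \ker(A - \theta_i I)$ is the kernel of a rational matrix and therefore has a basis in $\mathbb{Q}^{|\calP|}$; in such a basis $M|_{V_i}$ is a rational matrix, whence $a,b \in \mathbb{Q}$. On the other hand $a$ and $b$ are sums of eigenvalues of the finite-order matrix $M$, hence sums of roots of unity, hence algebraic integers. A rational algebraic integer lies in $\mathbb{Z}$, so $a,b \in \mathbb{Z}$, and then $(s+t)b \equiv 0 \pmod{s+t}$ is immediate. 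With this one correction your argument is complete.
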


The following result relates the total number of points sent to collinear points by $x$ to the total number of lines sent to collinear lines by $x$.

\begin{lem}\cite[1.9.2]{fgq}
\label{lem:pointlinecounts}
 Let $x$ be an automorphism of a finite generalized quadrangle of order $(s,t)$.  If $\alpha_0(x)$ denotes the number of points fixed by $x$, $\alpha_1(x)$ denotes the number of points sent by $x$ to distinct noncollinear points, $\beta_0(x)$ denotes the number of lines fixed by $x$, and $\beta_1(x)$ denotes the number of lines sent by $x$ to distinct concurrent lines, then
 \[(1+t)\alpha_0(x) + \alpha_1(x) = (1+s)\beta_0(x) + \beta_1(x).\]
\end{lem}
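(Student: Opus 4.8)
The plan is to prove this \emph{exact} identity by a \emph{flag-counting} argument---no eigenvalue machinery is needed, in contrast to the congruence of Lemma \ref{lem:Benson}. The idea is to evaluate two naturally paired families of incident point-line pairs, recovering the left- and right-hand sides respectively, and then to exhibit a bijection between the two families induced by the automorphism $x$. Concretely, I would introduce
\[ N := \#\{(P,\ell) \in \calP \times \calL : P \I \ell \text{ and } P^x \I \ell\}, \]
the number of flags $(P,\ell)$ whose line $\ell$ passes through both $P$ and its image $P^x$, and evaluate it by summing over $P$ and splitting according to which of $\calP_0(x)$, $\calP_1(x)$, $\calP_2(x)$ contains $P$. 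A fixed point ($P^x = P$) imposes no new constraint, so all $t+1$ lines through it qualify; a point of $\calP_1(x)$ shares a unique line with its distinct collinear image, contributing $1$; and a point of $\calP_2(x)$ contributes $0$ because $P \not\sim P^x$. Summing yields $N = (1+t)\alpha_0(x) + \alpha_1(x)$.

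Next I would run the companion count. Setting
\[ N' := \#\{(P,\ell) \in \calP \times \calL : P \I \ell \text{ and } P \I \ell^x\}, \]
the number of flags whose point $P$ lies on both $\ell$ and its image $\ell^x$, the identical trichotomy---now over lines, using the fixed-line / $\calL_1(x)$ / $\calL_2(x)$ split together with the fact that each line carries $s+1$ points---gives $N' = (1+s)\beta_0(x) + \beta_1(x)$.

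It then remains to prove $N = N'$, which is where the automorphism does the essential work. I would exhibit the map $\psi\colon (P,\ell) \mapsto (P^x,\ell)$ and check that it restricts to a bijection from the set counted by $N$ onto the set counted by $N'$. Indeed, if $P \I \ell$ and $P^x \I \ell$, then $(P^x,\ell)$ is a flag satisfying $P^x \I \ell$ and $P^x \I \ell^x$, the second incidence following from $P \I \ell$ because $x$ preserves incidence, so $\psi$ lands in the $N'$-set; the assignment $(Q,\ell) \mapsto (Q^{x^{-1}},\ell)$ inverts it, using that $x$ both preserves and reflects incidence. Hence $N = N'$, and the two evaluations combine to give
\[ (1+t)\alpha_0(x) + \alpha_1(x) = N = N' = (1+s)\beta_0(x) + \beta_1(x). \]

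The only step demanding genuine care is the bijection: I must confirm that $\psi$ carries every $N$-flag to a legitimate $N'$-flag and that the displayed assignment is truly its two-sided inverse, since it is easy to conflate the condition ``$\ell$ meets both $P$ and $P^x$'' with ``$P$ meets both $\ell$ and $\ell^x$.'' Keeping $N$ and $N'$ as separate flag sets from the outset and only identifying them through $\psi$ at the end should forestall that confusion; the remaining case analyses are routine, so I anticipate no serious obstacle beyond pinning down this correspondence precisely.
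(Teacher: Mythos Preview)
Your argument is correct. The paper does not supply its own proof of this lemma; it merely cites \cite[1.9.2]{fgq}, so there is nothing in the paper to compare against directly. Your flag-counting approach is precisely the classical proof of this identity: evaluating $N$ over points gives $(1+t)\alpha_0(x)+\alpha_1(x)$, evaluating $N'$ over lines gives $(1+s)\beta_0(x)+\beta_1(x)$, and the bijection $(P,\ell)\mapsto(P^x,\ell)$ between the two flag sets is exactly right, with the inverse you wrote down verified by applying $x^{-1}$ to the incidence $Q\,\I\,\ell^x$. One small note: the lemma as quoted in the paper contains a slip, describing $\alpha_1(x)$ as the number of points sent to distinct \emph{noncollinear} points; you have (correctly) used the definition $\alpha_1(x)=|\calP_1(x)|$ from Section~\ref{sect:background}, namely points sent to distinct \emph{collinear} points, which is what makes the count go through.
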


Given an automorphism $x$ of $\calQ$, the substructure $\calQ_x$ fixed by $x$ must have one of a few types.  The following result lists these possible types.  For convenience, our delineation into types is slightly different than what is listed in \cite{fgq}.

\begin{lem}\cite[2.4.1]{fgq}
\label{lem:fixedtypes}
 Let $x$ be an automorphism of a generalized quadrangle $\calQ$.  The substructure $\calQ_x$ fixed by $x$ is one of the following:
 \begin{itemize}
  \item[(0)] The substructure $\calQ_x$ is empty; that is, there are no fixed points and there are no fixed lines.
  \item[(1)] At least one point is fixed, all fixed points are noncollinear, and no lines are fixed.
  \item[($1^\prime$)] At least one line is fixed, all fixed lines are nonconcurrent, and no points are fixed.
  \item[(2)] There exists some fixed point $P$ such that $P \sim P^\prime$ for each fixed point $P^\prime$, there exists at least one fixed line, and every fixed line is incident with $P$.
  \item[($2^\prime$)] There exists some fixed line $\ell$ such that $\ell \sim \ell^\prime$ for each fixed line $\ell^\prime$, there exists at least one fixed point, and every fixed point is incident with $\ell$.
  \item[(3)] The substructure $\calQ_x$ is a grid with $s_1 < s_2$.
  \item[($3^\prime$)] The substructure $\calQ_x$ is a dual grid with $t_1 < t_2$.
  \item[(4)] The substructure $\calQ_x$ is a generalized subquadrangle of order $(s^\prime, t^\prime)$.  
 \end{itemize}
\end{lem}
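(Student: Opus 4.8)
The plan is to study the fixed substructure $\calQ_x$ directly as a combinatorial object: its points are the fixed points $\calP_0(x)$, its lines are the fixed lines $\calL_0(x)$, and incidence is inherited from $\calQ$. The single fact that drives the whole classification is a \emph{projection-closure} property: if $P$ is a fixed point and $\ell$ is a fixed line with $P \not\I \ell$, then the unique pair $(P',\ell')$ provided by the GQ Axiom, with $P \;\I\; \ell' \;\I\; P' \;\I\; \ell$, is itself fixed. Indeed, applying $x$ to this chain gives $P \;\I\; (\ell')^x \;\I\; (P')^x \;\I\; \ell$, and uniqueness in the GQ Axiom forces $(\ell')^x = \ell'$ and $(P')^x = P'$. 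I would also record the triangle-freeness of $\calQ$: any set of pairwise collinear points lies on a common line (otherwise one of the points together with the line joining the other two would admit two connecting pairs, contradicting uniqueness in the GQ Axiom), and dually any set of pairwise concurrent lines passes through a common point. These two facts are the entire engine.

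First I would dispose of the cases where one of $\calP_0(x)$, $\calL_0(x)$ is empty. If both are empty we are in case $(0)$. If there are fixed points but no fixed lines, then no two fixed points can be collinear, since the unique line joining two collinear fixed points would itself be fixed; this is case $(1)$, and the dual gives case $(1')$. Now assume both sets are nonempty. Projection closure then produces an incident fixed flag, since for a fixed point $P$ and a fixed line $\ell$ we either have $P \;\I\; \ell$ or a fixed line through $P$ meeting $\ell$ in a fixed point. I would branch on degeneracy. If the fixed points are pairwise collinear, then either there is a single fixed point, in which case closure forces every fixed line through it (case $(2)$), or there are at least two, which by triangle-freeness lie on one common fixed line $m$; closure then forces every fixed line to meet $m$ and every fixed point to lie on $m$, giving case $(2')$. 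The dual degeneracy, all fixed lines pairwise concurrent, yields cases $(2')$ and $(2)$ symmetrically.

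In the remaining, generic situation there exist two noncollinear fixed points and two nonconcurrent fixed lines, and $\calQ_x$ satisfies the GQ Axiom together with the ``at most one common line/point'' condition, i.e.\ it is a generalized quadrangle in the axiomatic sense apart from the constancy of its order. Here I would run the standard double-counting argument on the substructure: fixing a flag and applying projection closure repeatedly, one shows that the number of fixed points on a fixed line is constant along each ``parallel class'', and dually for the fixed lines through a fixed point. When $\calQ_x$ is thick (every fixed line carrying at least three fixed points and every fixed point lying on at least three fixed lines) this counting collapses to a single pair of parameters, exhibiting $\calQ_x$ as a genuine subquadrangle of order $(s',t')$, which is case $(4)$; the non-thick connected possibilities are exactly the grids and dual grids, producing cases $(3)$ and $(3')$ when the two sides are unequal, a square grid being absorbed into case $(4)$ as a subquadrangle of order $(s',1)$.

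The hard part will be this last dichotomy. The projection-closure property itself is immediate, so all the genuine work lies in the combinatorial bookkeeping of the generic case: proving from closure alone that the local parameters are constant, and then separating the borderline configurations — a single fixed point or fixed line, and rectangular versus square grids — so that every possibility is assigned to exactly one of $(0)$ through $(4)$. I expect establishing the constancy of the per-line and per-point fixed counts, and handling these degenerate boundary cases cleanly, to be the main obstacle; everything else follows from the two facts established at the outset.
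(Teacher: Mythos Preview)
The paper does not actually prove this lemma: it is stated with a citation to \cite[2.4.1]{fgq} and no argument is given beyond the remark that the authors' delineation into types differs slightly from that source. Your sketch is correct and is essentially the standard proof found in that reference: the projection-closure property (uniqueness in the GQ Axiom forces the connecting flag between a fixed point and a fixed line to be fixed) together with triangle-freeness are exactly the two ingredients, and your case analysis --- empty, no fixed lines, no fixed points, all fixed points collinear or dually, and otherwise a weak subquadrangle --- mirrors the organization in Payne--Thas. You are also right that the only real work is in the generic case, showing that closure under projection forces constant local parameters (or a grid/dual grid when one side collapses); this is precisely what \cite[2.4.1]{fgq} does, and the paper defers to it.
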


Note that we allow in (4) the possibility that $\calQ_x$ is a \textit{grid} or a \textit{dual grid}, i.e., we allow either $s^\prime = 1$ or $t^\prime = 1$.

Finally, we introduce some terminology from permutation group theory that will be used in Section \ref{sect:412}.  The action of a group $G$ on a set $\Omega$ is said to be \textit{quasiprimitive} if every nontrivial normal subgroup of $G$ is transitive on $\Omega$.  Quasiprimitive groups are a generalization of primitive permutation groups, since, if $G$ acts on $\Omega$ and $G$ contains a normal subgroup $N$ that is intransitive on $\Omega$, then the set of orbits of $N$ on $\Omega$ are a $G$-invariant partition of $\Omega$.  For a characterization of quasiprimitive permutation groups, see \cite[Section 2]{PraegerQuasiprimitive}. 

\section{Automorphisms of prime order}
\label{sect:autprime}
In this section, we collect a number of basic results about prime order automorphisms of generalized quadrangles.  Throughout this section, $\calQ$ will refer to a generalized quadrangle of order $(s,t)$ with point set $\calP$ and line set $\calL$.  For a given automorphism $x$ of $\calQ$, the \textit{type} of $\calQ_x$ refers to its designation in Lemma \ref{lem:fixedtypes}.

\begin{lem}
  \label{lem:alphaicong}
 Let $x$ be an automorphism of $\calQ$ with order $p$, where $p$ is a prime.  For $i = 1,2$, we have
 \[ \alpha_i(x), \beta_i(x) \equiv 0 \pmod p.\]
 Moreover, 
 \[\alpha_0(x) \equiv (s+1)(st+1) \pmod p\]
 and
 \[\beta_0(x) \equiv (t+1)(st+1) \pmod p.\]
\end{lem}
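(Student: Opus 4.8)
The plan is to exploit the fact that $\langle x \rangle$ is a cyclic group of prime order $p$ acting on $\calP$ and on $\calL$, so that every $\langle x \rangle$-orbit on points, and on lines, has size either $1$ or $p$. The first step is to observe that the three sets $\calP_0(x)$, $\calP_1(x)$, $\calP_2(x)$ partition $\calP$, and that each of them is $\langle x \rangle$-invariant. The set $\calP_0(x)$ is invariant by definition. For $\calP_1(x)$ and $\calP_2(x)$ one checks directly: if $P^x \neq P$ then $(P^x)^x \neq P^x$, since $(P^x)^x = P^x$ would force $P^x = P$ after applying $x^{-1}$; if $P \sim P^x$ then, because $x$ preserves collinearity, $P^x \sim (P^x)^x$; and if $P \not\sim P^x$ then likewise $P^x \not\sim (P^x)^x$. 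Hence $P \in \calP_i(x)$ implies $P^x \in \calP_i(x)$ for $i = 1, 2$, so these two sets are unions of $\langle x \rangle$-orbits, none of which contains a fixed point; each such orbit therefore has size exactly $p$, and counting gives $\alpha_1(x) \equiv \alpha_2(x) \equiv 0 \pmod p$.

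For the congruence on $\alpha_0(x)$, I would use that $|\calP| = \alpha_0(x) + \alpha_1(x) + \alpha_2(x)$ together with the previous step to conclude $\alpha_0(x) \equiv |\calP| \pmod p$; since $|\calP| = (s+1)(st+1)$ by Lemma~\ref{lem:GQbasics}(i), this is exactly the asserted congruence $\alpha_0(x) \equiv (s+1)(st+1) \pmod p$.

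The statements for $\beta_i(x)$ are proved in precisely the same way, using that $x$ preserves concurrence of lines in place of collinearity of points: the sets $\calL_0(x)$, $\calL_1(x)$, $\calL_2(x)$ are $\langle x \rangle$-invariant, the latter two are unions of $\langle x \rangle$-orbits of size $p$, so $\beta_1(x) \equiv \beta_2(x) \equiv 0 \pmod p$, and then $\beta_0(x) \equiv |\calL| = (t+1)(st+1) \pmod p$, again by Lemma~\ref{lem:GQbasics}(i).

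There is no genuinely hard step here; the only point that requires care is verifying that $\calP_1(x)$ and $\calP_2(x)$ (and their line analogues) are each individually $\langle x \rangle$-invariant, rather than merely being permuted as part of some larger family. This is where one uses both that $x$ is an automorphism (so it respects collinearity and concurrence) and the convention $P \sim P$ (which guarantees that a point in $\calP_2(x)$ is in particular not fixed).
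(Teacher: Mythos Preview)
Your proof is correct and follows essentially the same approach as the paper: partition $\calP_i(x)$ (and $\calL_i(x)$) into $\langle x\rangle$-orbits, note that for $i=1,2$ these orbits all have size $p$, and then deduce the congruence for $\alpha_0(x)$ and $\beta_0(x)$ from $|\calP|=\alpha_0+\alpha_1+\alpha_2$ and $|\calL|=\beta_0+\beta_1+\beta_2$. The only difference is that you explicitly verify the $\langle x\rangle$-invariance of $\calP_1(x)$ and $\calP_2(x)$, which the paper leaves implicit.
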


\begin{proof}
 The set $\calP_1(x)$ can be partioned into orbits of $\langle x \rangle$, and, since none of these points are fixed, each orbit has size $p$. This implies that $\alpha_1(x) \equiv 0 \pmod p$.  By duality, $\beta_1(x) \equiv 0 \pmod p$, and analogous arguments show that $\alpha_2(x), \beta_2(x) \equiv 0 \pmod p$.  The results for $\alpha_0(x), \beta_0(x)$ follow from \[(s+1)(st+1) = |\calP| = \alpha_0(x) + \alpha_1(x) + \alpha_2(x)\]
 and
 \[(t+1)(st+1) = |\calL| = \beta_0(x) + \beta_1(x) + \beta_2(x).\]
\end{proof}

\begin{lem}
 \label{lem:type0cong}
 Let $x$ be an automorphism of $\calQ$ of order $p$, where $p$ is a prime, and assume that $\calQ_x$ has type $(0)$.  Then either $t+1 \equiv s+1 \equiv 0 \pmod p$ or $st+1 \equiv 0 \pmod p$.  If $p$ is an odd prime, then $s+1 \equiv t+1 \equiv 0 \pmod p$ if and only if $st+1 \not\equiv 0 \pmod p.$
\end{lem}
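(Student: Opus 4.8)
The starting point is Lemma \ref{lem:alphaicong}: when $\calQ_x$ has type $(0)$ we have $\alpha_0(x) = 0$, so the congruence $\alpha_0(x) \equiv (s+1)(st+1) \pmod p$ forces $(s+1)(st+1) \equiv 0 \pmod p$. Since $p$ is prime, this immediately gives $s+1 \equiv 0 \pmod p$ or $st+1 \equiv 0 \pmod p$. Dually, using $\beta_0(x) = 0$ and the congruence for $\beta_0(x)$, we get $t+1 \equiv 0 \pmod p$ or $st+1 \equiv 0 \pmod p$. Combining the two dichotomies: either $st+1 \equiv 0 \pmod p$, or else both $s+1 \equiv 0$ and $t+1 \equiv 0 \pmod p$. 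That is exactly the first assertion.

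For the second assertion, assume $p$ is odd. The ``only if'' direction is the content of the mild arithmetic: if $s+1 \equiv t+1 \equiv 0 \pmod p$, then $s \equiv t \equiv -1 \pmod p$, so $st + 1 \equiv (-1)(-1) + 1 = 2 \pmod p$, and since $p$ is odd this is $\not\equiv 0 \pmod p$. The ``if'' direction is the contrapositive of what we want: suppose $st + 1 \not\equiv 0 \pmod p$; then from the first assertion (the only surviving alternative) we must have $s+1 \equiv t+1 \equiv 0 \pmod p$. So the two directions together give the stated equivalence.

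The only subtle point — and it is barely an obstacle — is making sure the type $(0)$ hypothesis is used correctly to conclude $\alpha_0(x) = \beta_0(x) = 0$ directly from Lemma \ref{lem:fixedtypes}(0), and that the oddness of $p$ is genuinely needed (it is precisely what rules out $2 \equiv 0$). No case analysis beyond the primality of $p$ is required, and no appeal to Benson's Lemma or to the parameter constraints in Lemma \ref{lem:GQbasics} is needed here. I would write this up in a few lines.
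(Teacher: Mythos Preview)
Your proposal is correct and follows essentially the same route as the paper: use $\alpha_0(x)=\beta_0(x)=0$ together with Lemma~\ref{lem:alphaicong} to obtain $p\mid (s+1)(st+1)$ and $p\mid (t+1)(st+1)$, apply primality to get the dichotomy, and for odd $p$ compute $st+1\equiv 2\pmod p$ when $s\equiv t\equiv -1$. The paper only writes out the ``only if'' direction of the biconditional explicitly, leaving the ``if'' direction to the first assertion, exactly as you do.
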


\begin{proof}
 Since $\calQ_x$ has type $(0)$, it follows that $\alpha_0(x) = \beta_0(x) = 0$, implying that $p \mid (s+1)(st+1)$ and $p \mid (t+1)(st+1)$.  If $p \nmid (st+1)$, then $p \mid (s+1)$ and $p \mid (t+1)$ by Euclid's Lemma.  Finally, if $p$ is an odd prime and $s+1 \equiv t+1 \equiv 0 \pmod p$, then 
 \[st+1 \equiv (-1)(-1) + 1 \equiv 2 \not\equiv 0 \pmod p.\]
\end{proof}

\begin{lem}
 \label{lem:type1cong}
 Let $x$ be an automorphism of $\calQ$ of order $p$, where $p$ is a prime.  If $\calQ_x$ has type $(1)$, then $t+1 \equiv 0 \pmod p$.  If $\calQ_x$ has type $(1^\prime)$, then $s+1 \equiv 0 \pmod p$.
\end{lem}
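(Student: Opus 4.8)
The plan is to exploit the defining feature of a type $(1)$ substructure directly: by Lemma \ref{lem:fixedtypes}, if $\calQ_x$ has type $(1)$ then there is at least one fixed point and there are no fixed lines. So fix a point $P$ with $P^x = P$. Since $x$ is an automorphism of $\calQ$ fixing $P$, it permutes the set of $t+1$ lines incident with $P$. Because $\calQ_x$ has type $(1)$, none of these lines is fixed by $x$, so $\langle x \rangle$ acts on this $(t+1)$-element set with no fixed points. As $|\langle x \rangle| = p$ is prime, every orbit of $\langle x \rangle$ on this set has size exactly $p$, and therefore $p \mid t+1$. This is the same orbit-counting observation used in the proof of Lemma \ref{lem:alphaicong}, now applied to the pencil of lines through a single fixed point rather than to $\calP_1(x)$.

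The type $(1^\prime)$ statement follows by the dual argument: if $\calQ_x$ has type $(1^\prime)$, then there is a fixed line $\ell$ and no fixed points, so $x$ permutes the $s+1$ points incident with $\ell$ without fixing any of them, forcing every orbit to have size $p$ and hence $p \mid s+1$. Alternatively, one can simply pass to the dual quadrangle, in which a type $(1^\prime)$ substructure becomes a type $(1)$ substructure and the roles of $s$ and $t$ are interchanged, and then invoke the case already handled.

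The only point requiring care is checking that the hypotheses of type $(1)$ supply exactly what the argument needs — the simultaneous existence of a fixed point together with the nonexistence of any fixed line — which is immediate from the description in Lemma \ref{lem:fixedtypes}. Beyond that there is no real obstacle: the proof is a one-line orbit-counting argument, and, in contrast to the proof of Theorem \ref{thm:main}, it requires no appeal to Benson's Lemma or to the Higman-Sims inequality of Lemma \ref{lem:PayneIneq}.
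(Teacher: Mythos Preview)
Your proof is correct and follows essentially the same approach as the paper: pick a fixed point $P$, note that $x$ permutes the $t+1$ lines through $P$ with no fixed lines, and conclude that all $\langle x \rangle$-orbits have size $p$, whence $p \mid t+1$; the type $(1^\prime)$ case is handled by duality.
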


\begin{proof}
 We will prove the result for $\calQ_x$ of type $(1)$; the analogous result for type $(1^\prime)$ follows by duality.  Since $\calQ_x$ has type $(1)$, there are no fixed lines, but there is at least one fixed point.  Let $P$ be any fixed point, and let $\calL(P)$ be the lines incident with $P$.  Since $x$ is an automorphism, if $\ell \in \calL(P)$, then $\ell^x \in \calL(P)$.  Since no line is fixed by $x$, $|\calL(P)| = t+1$, and $\calL(P)$ can be partitioned into orbits of $\langle x \rangle$, it follows that $t+1 \equiv 0 \pmod p$. 
\end{proof}

\begin{lem}
 \label{lem:type2cong}
 Let $x$ be an automorphism of $\calQ$ of order $p$, where $p$ is a prime.  
 \begin{enumerate}[(i)]
   \item If $\calQ_x$ has type $(2)$ and $\alpha_0(x) = 1$, then $s+1 \equiv 1 \pmod p$.  
   \item If $\calQ_x$ has type $(2)$ and $\alpha_0(x) \ge 2$, then $t+1 \equiv 1 \pmod p$.
   \item If $\calQ_x$ has type $(2^\prime)$ and $\beta_0(x) = 1$, then $t+1 \equiv 1 \pmod p$.
   \item If $\calQ_x$ has type $(2^\prime)$ and $\beta_0(x) \ge 2$, then $s+1 \equiv 1 \pmod p$.
 \end{enumerate}
\end{lem}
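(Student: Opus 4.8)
The plan is to establish parts (i) and (ii) by direct orbit-counting arguments inside $\calQ$, and then to deduce parts (iii) and (iv) by passing to the dual quadrangle, in which type $(2)$ becomes type $(2^\prime)$, $\alpha_0(x)$ becomes $\beta_0(x)$, and the roles of $s$ and $t$ are interchanged; since type $(2^\prime)$ is by definition the point-line dual of type $(2)$, the dual arguments apply word for word.

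For part (i), suppose $\calQ_x$ has type $(2)$ with $\alpha_0(x) = 1$, so the unique fixed point is the distinguished point $P$ of the type $(2)$ configuration, and let $\ell$ be one of the (at least one) fixed lines, which is necessarily incident with $P$. Since $x$ fixes both $\ell$ and $P$, it permutes the remaining $s$ points of $\ell$ among themselves; because $\alpha_0(x) = 1$, this action has no fixed points, so partitioning these $s$ points into $\langle x \rangle$-orbits, each of size $p$, gives $p \mid s$, that is, $s + 1 \equiv 1 \pmod{p}$.

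For part (ii), suppose instead that $\alpha_0(x) \ge 2$ and pick a fixed point $P^\prime \neq P$; note $P \sim P^\prime$, since $P$ is collinear to every fixed point. The key observation is that $P^\prime$ lies on \emph{exactly one} fixed line: every fixed line is incident with $P$ by the definition of type $(2)$, so any fixed line through $P^\prime$ passes through both $P$ and $P^\prime$ and hence is the unique line $PP^\prime$; conversely $PP^\prime$ is fixed because $x$ fixes $P$ and $P^\prime$. Since $x$ fixes $P^\prime$, it permutes the $t+1$ lines through $P^\prime$; exactly one of these is fixed and the rest lie in $\langle x \rangle$-orbits of size $p$, so $p \mid t$, that is, $t + 1 \equiv 1 \pmod{p}$.

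Finally, applying the arguments of (i) and (ii) verbatim in the dual generalized quadrangle $\calQ^\prime$ yields (iii) and (iv), respectively. I do not expect a genuine obstacle here, as each part reduces to counting the orbits of $\langle x\rangle$ on a set of points on a fixed line or a set of lines through a fixed point; the one place that needs care is the uniqueness of the fixed line through $P^\prime$ in part (ii), which relies essentially on the clause ``every fixed line is incident with $P$'' in the definition of type $(2)$ — without it, $P^\prime$ could lie on several fixed lines and the stated congruence could fail.
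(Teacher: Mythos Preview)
Your proof is correct and follows essentially the same approach as the paper: orbit-counting on the remaining points of a fixed line for part (i), orbit-counting on the lines through a second fixed point for part (ii), and duality for parts (iii) and (iv). Your explicit justification that $P^\prime$ lies on exactly one fixed line in part (ii) is a detail the paper leaves implicit, but otherwise the arguments coincide.
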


\begin{proof}
 We will prove the results for $\calQ_x$ of type $(2)$ and note that the analogous results for type $(2^\prime)$ follow by duality.  Assume first that $\alpha_0(x) = 1$, and let $P$ be this unique fixed point.  By assumption, there is at least one fixed line $\ell$ incident with $P$, and the $s$ remaining points of $\ell$ are partitioned into orbits of size $p$ of $\langle x \rangle$, proving that $s+1 \equiv 1 \pmod p$.
 \par Now assume that $\alpha_0(x) \ge 2$. Let $P,Q$ be two distinct fixed points, where we may assume by hypothesis that $P\sim P'$ for every $P'\in\calP_0(x).$ Hence $P \sim Q$, and so $x$ also fixes the unique line $\ell$ with which $P$ and $Q$ are mutually incident.  Since none of the other $t$ lines incident with $Q$ are fixed by $x$, these $t$ lines are partitioned into orbits of size $p$, proving that $t+1 \equiv 1 \pmod p$. 
\end{proof}

\begin{lem}
 \label{lem:type2fixed}
 Let $x$ be an automorphism of $\calQ$ of order $p$, where $p$ is a prime.  If $\calQ_x$ has type $(2)$, then
 \[\alpha_0(x) \equiv 1 + s\beta_0(x) \pmod p.\]
 If $\calQ_x$ has type $(2^\prime)$, then
 \[\beta_0(x) \equiv 1 + t\alpha_0(x) \pmod p.\]
\end{lem}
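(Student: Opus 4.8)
The plan is to exploit the very rigid shape of a type-$(2)$ fixed substructure. First I would fix the distinguished fixed point $P$ (the one collinear with every fixed point) and observe that every fixed point other than $P$ lies on a \emph{unique} fixed line: if $P'\in\calP_0(x)$ with $P'\neq P$, then $P\sim P'$ by the definition of type $(2)$, so there is a unique line $m$ incident with both $P$ and $P'$, and since $x$ fixes $P$ and $P'$ it must fix $m$; conversely, every fixed line is incident with $P$ by the type-$(2)$ hypothesis. Hence $\calP_0(x)$ is partitioned as $\{P\}$ together with, for each fixed line $\ell$, the set of fixed points on $\ell$ distinct from $P$.

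Next I would count the fixed points on a single fixed line $\ell$ modulo $p$. Since $\ell\in\calL_0(x)$, the cyclic group $\langle x\rangle$ acts on the $s+1$ points incident with $\ell$; the point $P$ is fixed, and the remaining $s$ points are permuted among themselves in $\langle x\rangle$-orbits, each of size $1$ or $p$ because $p$ is prime. Writing $a_\ell$ for the number of fixed points on $\ell$ other than $P$, this gives $s-a_\ell\equiv 0\pmod p$, that is, $a_\ell\equiv s\pmod p$.

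Summing over the $\beta_0(x)$ fixed lines and invoking the partition of $\calP_0(x)$ from the first step then yields
\[\alpha_0(x)=1+\sum_{\ell\in\calL_0(x)}a_\ell\equiv 1+s\,\beta_0(x)\pmod p,\]
which is the claim. The statement for type $(2^\prime)$ follows by applying the point-line dual of this argument: the fixed lines through the common fixed point $P$ become fixed points on a common fixed line $\ell$, each fixed line meets $\ell$ in a unique fixed point, and the roles of $s$ and $t$ are interchanged, giving $\beta_0(x)\equiv 1+t\,\alpha_0(x)\pmod p$. I do not anticipate a genuine obstacle here; the only point requiring care is verifying that the decomposition of $\calP_0(x)$ is an honest partition — i.e., that each fixed point $\neq P$ is accounted for on exactly one fixed line — which is exactly what the uniqueness of the line joining two collinear points supplies.
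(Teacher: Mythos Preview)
Your proposal is correct and follows essentially the same argument as the paper: fix the distinguished point $P$, show that on each fixed line the $s$ points other than $P$ fall into $\langle x\rangle$-orbits of size $1$ or $p$ so that the number of fixed points on $\ell$ distinct from $P$ is congruent to $s$ modulo $p$, and then sum over the $\beta_0(x)$ fixed lines to obtain $\alpha_0(x)\equiv 1+s\beta_0(x)\pmod p$, with the type-$(2')$ case handled by duality. Your explicit verification that the sets of fixed points on the fixed lines (excluding $P$) actually partition $\calP_0(x)\setminus\{P\}$ is a nice touch that the paper leaves implicit.
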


\begin{proof}
 We will prove the result for $\calQ_x$ of type $(2)$; the result when $\calQ_x$ is of type $(2^\prime)$ follows by duality.  Let $P$ be the distinguished point with which all fixed lines of $x$ are incident and all fixed points of $x$ are collinear.  For any fixed line $\ell$, let $s_0(\ell)$ be the number of fixed points incident with $\ell$ other than $P$ and let $s_1(\ell)$ be the number of points incident with $\ell$ not fixed by $x$.  Noting that the $s_1(\ell)$ points of $\ell$ that are not fixed by $x$ are partitioned into orbits of size $p$ of $\langle x \rangle$, we have $s_1(\ell) \equiv 0 \pmod p$, which implies that $s_0(\ell) \equiv s \pmod p$ since $s_0(\ell) + s_1(\ell) = s$.  If the $\beta_0(x)$ lines fixed by $x$ are $\ell_1, \dots, \ell_{\beta_0(x)}$, then
 \[\alpha_0(x) = 1 + \sum_{i=1}^{\beta_0(x)} s_0(\ell_i) \equiv 1 + \sum_{i=1}^{\beta_0(x)} s \pmod p,\]
 and so 
 \[\alpha_0(x) \equiv 1 + s\beta_0(x) \pmod p,\]
 as desired.
\end{proof}

\begin{lem}
 \label{lem:type3fixed}
Let $x$ be an automorphism of $\calQ$ of order $p$, where $p$ is a prime.  If $\calQ_x$ has type $(3)$, then $t+1 \equiv 2 \pmod p$ and $s_1 \equiv s_2 \equiv s \pmod p$.  If $\calQ_x$ has type $(3^\prime)$, then $s+1 \equiv 2 \pmod p$ and $t_1 \equiv t_2 \equiv t \pmod p$.  In particular, if $\calQ_x$ has type $(3)$ or type $(3^\prime)$, then  $p < \max\{s,t\}$, and, if $\calQ$ is a thick generalized quadrangle and $\calQ_x$ has either type (3) or type ($3^\prime$), then $p < \min\{s,t\}$.   
\end{lem}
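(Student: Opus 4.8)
The plan is to argue directly with the combinatorial structure of the fixed grid, counting the points on fixed lines and the lines through fixed points modulo $p$, in the spirit of the proofs of Lemmas \ref{lem:type1cong} and \ref{lem:type2cong}. I will treat type $(3)$ in detail and deduce type $(3^\prime)$ from the usual point--line duality.

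Suppose then that $\calQ_x$ is a grid with $s_1<s_2$, having grid points $P_{i,j}$ for $0\le i\le s_1$, $0\le j\le s_2$, and fixed lines $\ell_0,\dots,\ell_{s_1}$ together with $\ell_0^\prime,\dots,\ell_{s_2}^\prime$, where $\ell_i$ carries the $s_2+1$ grid points $P_{i,0},\dots,P_{i,s_2}$ and $\ell_j^\prime$ carries the $s_1+1$ grid points $P_{0,j},\dots,P_{s_1,j}$. The key observation is that, for a fixed line $m$ of $\calQ$, the fixed points incident with $m$ are exactly the grid points incident with $m$ (any fixed point on $m$ lies in $\calQ_x$, hence in the grid, and conversely), and dually the fixed lines through a fixed point $P$ are exactly the grid lines through $P$. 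Hence a fixed point $P_{i,j}$ lies on precisely two fixed lines, $\ell_i$ and $\ell_j^\prime$, so the remaining $t-1$ lines of $\calQ$ through $P_{i,j}$ are permuted by $\langle x\rangle$ in orbits of length $p$, since none of them is fixed; thus $p\mid t-1$, that is, $t+1\equiv 2\pmod p$. Likewise $\ell_i$ has $s+1$ points in $\calQ$, of which exactly $s_2+1$ are fixed, so $p\mid s-s_2$; and $\ell_j^\prime$ has $s+1$ points, of which exactly $s_1+1$ are fixed, so $p\mid s-s_1$. Therefore $s_1\equiv s_2\equiv s\pmod p$, and the corresponding statement for type $(3^\prime)$ is the point--line dual.

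For the inequalities I will first observe that type $(3)$ forces $s_1\ge 1$: if $s_1=0$, then every fixed point lies on the single fixed line $\ell_0$ while the lines $\ell_j^\prime$ are pairwise nonconcurrent, so $\calQ_x$ would in fact be of type $(2^\prime)$, contrary to hypothesis. Since each $\ell_i$ has exactly $s+1$ points in $\calQ$, at least $s_2+1$ of which are grid points, we also have $s_2\le s$; so $1\le s_1<s_2\le s$ and hence $0<s-s_1\le s-1$, which with $p\mid s-s_1$ gives $p\le s-1<s\le\max\{s,t\}$. If moreover $\calQ$ is thick, then $t\ge 2$, so $t-1\ge 1$ and $p\mid t-1$ forces $p\le t-1<t$; combined with $p<s$ this yields $p<\min\{s,t\}$. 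The bounds $p<t$ for type $(3^\prime)$, and $p<\min\{s,t\}$ in the thick case, follow by duality.

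The only slightly delicate step is the reduction to $s_1\ge 1$ in type $(3)$: one must be sure that a degenerate grid with $s_1=0$ is genuinely excluded from type $(3)$ by the conventions of Lemma \ref{lem:fixedtypes} and is instead subsumed under type $(2^\prime)$, since otherwise the congruence $p\mid s-s_1$ degenerates to $p\mid s$ and only yields $p\le s$ rather than $p<s$. Everything else is the same orbit-counting used repeatedly in this section.
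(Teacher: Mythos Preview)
Your proof is correct and follows essentially the same orbit-counting argument as the paper: count the nonfixed lines through a fixed grid point to get $p\mid t-1$, and count the nonfixed points on each type of fixed grid line to get $s_1\equiv s_2\equiv s\pmod p$, then read off the inequalities. Your explicit justification that $s_1\ge 1$ (since $s_1=0$ would place $\calQ_x$ in type $(2^\prime)$ rather than type $(3)$) makes precise a point the paper leaves implicit in its delineation of types; otherwise the two arguments are the same.
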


\begin{proof}
 We will prove the result for $\calQ_x$ of type $(3)$; the result when $\calQ_x$ is of type $(3^\prime)$ follows by duality.  Let $P$ be a fixed point of the grid.  Since exactly two lines incident with $P$ are fixed by $x$, the remaining lines incident with $P$ must be partitioned into $\langle x \rangle$-orbits of size $p$, and hence $t + 1 \equiv 2 \pmod p$.  Now, there are two types of lines in the grid: those containing $s_1 + 1$ fixed points and those containing $s_2 + 1$ fixed points.  For a line $\ell$ of $\calQ$ fixed by $x$ containing $s_1 + 1$ fixed points in $\calQ_x$, the remaining $(s+1) - (s_1 + 1)$ points incident with $\ell$ are partioned into $\langle x \rangle$-orbits, and so $s_1 \equiv s \pmod p$.  Analogously, we have $s_2 \equiv s \pmod p$.  Finally, the prime $p$ divides $(t + 1) - 2 = t- 1$, so $p \le t - 1$ if $t > 1$, and, since $s_1 < s_2 \le s$, $p$ divides $s - s_1 > 0$ and $p \le s - s_1$. The result follows.
\end{proof}

\begin{lem}
 \label{lem:neither1modp}
 Let $x$ be an automorphism of $\calQ$ of order $p$, where $p$ is a prime.  If $s+1 \not\equiv 0,1,2 \pmod p$ and $t+1 \not\equiv 0,1,2 \pmod p$, then either $\calQ_x$ has type $(0)$ and $st+1 \equiv 0 \pmod p$, or $\calQ_x$ has type $(4)$.
\end{lem}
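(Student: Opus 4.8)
The plan is to run through the classification of fixed substructures in Lemma \ref{lem:fixedtypes} and show that, under the stated hypotheses, every type other than $(0)$ (with the extra congruence $st+1 \equiv 0 \pmod p$) and $(4)$ leads to a contradiction with one of the congruence lemmas already established in this section. In other words, the proof is a short case analysis: assume $\calQ_x$ has a given type, invoke the relevant lemma, and derive one of the forbidden congruences $s+1 \equiv 0,1,2 \pmod p$ or $t+1 \equiv 0,1,2 \pmod p$.

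First I would dispose of types $(1)$ and $(1^\prime)$: Lemma \ref{lem:type1cong} says type $(1)$ forces $t+1 \equiv 0 \pmod p$ and type $(1^\prime)$ forces $s+1 \equiv 0 \pmod p$, both excluded by hypothesis. Next, for types $(2)$ and $(2^\prime)$, note that a type-$(2)$ substructure has at least one fixed point, so $\alpha_0(x) \ge 1$, and hence either $\alpha_0(x) = 1$ or $\alpha_0(x) \ge 2$; by Lemma \ref{lem:type2cong} the former gives $s+1 \equiv 1 \pmod p$ and the latter gives $t+1 \equiv 1 \pmod p$, again both excluded, and dually for type $(2^\prime)$. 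Then for types $(3)$ and $(3^\prime)$, Lemma \ref{lem:type3fixed} gives $t+1 \equiv 2 \pmod p$ for type $(3)$ and $s+1 \equiv 2 \pmod p$ for type $(3^\prime)$, once more excluded.

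It then remains only to examine type $(0)$. By Lemma \ref{lem:type0cong}, type $(0)$ forces either $s+1 \equiv t+1 \equiv 0 \pmod p$ or $st+1 \equiv 0 \pmod p$; since $s+1 \not\equiv 0 \pmod p$, the first alternative is impossible, so $st+1 \equiv 0 \pmod p$, which is exactly the conclusion in this case. Having eliminated types $(1)$, $(1^\prime)$, $(2)$, $(2^\prime)$, $(3)$, $(3^\prime)$ and pinned down the congruence in type $(0)$, the only remaining possibility in Lemma \ref{lem:fixedtypes} is type $(4)$, which completes the argument.

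Since every step is a direct appeal to a lemma already in hand, there is no real obstacle to overcome; the only point requiring a little care is confirming that the two sub-cases of Lemma \ref{lem:type2cong} ($\alpha_0(x)=1$ versus $\alpha_0(x)\ge 2$) genuinely exhaust the type-$(2)$ situation, which holds because type $(2)$ by definition has at least one fixed point, and likewise for the dual type $(2^\prime)$.
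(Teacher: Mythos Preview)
Your proposal is correct and follows essentially the same case analysis as the paper's proof: both run through the types of Lemma~\ref{lem:fixedtypes}, eliminating types $(1)$, $(1^\prime)$, $(2)$, $(2^\prime)$, $(3)$, $(3^\prime)$ via Lemmas~\ref{lem:type1cong}, \ref{lem:type2cong}, \ref{lem:type3fixed}, and using Lemma~\ref{lem:type0cong} to pin down the $st+1 \equiv 0 \pmod p$ condition in type $(0)$. Your treatment of the two sub-cases in Lemma~\ref{lem:type2cong} is slightly more explicit than the paper's, but the argument is the same.
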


\begin{proof}
 We will proceed through the types listed in Lemma \ref{lem:fixedtypes}.  If $\calQ_x$ has type $(0)$, then since $p$ divides neither $s+1$ nor $t+1$, it follows that $st+1 \equiv 0 \pmod p$ by Lemma \ref{lem:type0cong}.  If $\calQ_x$ has type $(1)$ or $(1^\prime)$, then either $s+1 \equiv 0 \pmod p$ or $t+1 \equiv 0 \pmod p$ by Lemma \ref{lem:type1cong}, contrary to our hypotheses.  If $\calQ_x$ has type $(2)$ or $(2^\prime)$, then either $s+1 \equiv 1 \pmod p$ or $t+1 \equiv 1 \pmod p$ by Lemma \ref{lem:type2cong}, a contradiction to our hypotheses.  Finally, if $\calQ_x$ has type $(3)$ or type $(3^\prime)$, then either $s+1 \equiv 2 \pmod p$ or $t+1 \equiv 2 \pmod p$ by Lemma \ref{lem:type3fixed}, again a contradiction. 
\end{proof}

\begin{lem}
 \label{lem:type4cong}
 Let $x$ be an automorphism of $\calQ$ of order $p$, where $p$ is a prime.  If $\calQ_x$ has type $(4)$ and is a subquadrangle of order $(s^\prime, t^\prime)$, then $s^\prime \equiv s \pmod p$ and $t^\prime \equiv t \pmod p$.
\end{lem}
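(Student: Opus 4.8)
The plan is to imitate the orbit-counting arguments used in Lemmas \ref{lem:type1cong}, \ref{lem:type2cong}, and \ref{lem:type3fixed}, now applied to a single fixed point and a single fixed line of the subquadrangle $\calQ_x$. Since $\calQ_x$ has type $(4)$ it is nonempty, so we may pick a point $P$ lying in $\calQ_x$.

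First I would exploit that $x$ fixes $P$ and hence permutes the set $\calL(P)$ of the $t+1$ lines of $\calQ$ incident with $P$. A line of $\calL(P)$ is fixed by $x$ if and only if it is a line of the fixed substructure $\calQ_x$, and since $\calQ_x$ is a generalized subquadrangle of order $(s^\prime, t^\prime)$, exactly $t^\prime + 1$ of the lines in $\calL(P)$ lie in $\calQ_x$. Therefore the remaining $(t+1) - (t^\prime + 1) = t - t^\prime$ lines of $\calL(P)$ are moved by $x$, and as $x$ has prime order $p$ these lines are partitioned into $\langle x \rangle$-orbits of size $p$; hence $p \mid (t - t^\prime)$, that is, $t^\prime \equiv t \pmod p$. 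Dually (equivalently, applying the same argument in the dual of $\calQ$, where $\calQ_x$ becomes a subquadrangle of order $(t^\prime, s^\prime)$), I would fix a line $\ell$ of $\calQ_x$ and count the $s+1$ points of $\calQ$ incident with $\ell$: exactly $s^\prime + 1$ of them lie in $\calQ_x$, and the remaining $s - s^\prime$ are permuted in $\langle x \rangle$-orbits of size $p$, giving $p \mid (s - s^\prime)$ and so $s^\prime \equiv s \pmod p$.

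I do not expect any real obstacle here. The only point that needs care is the observation that a line through the fixed point $P$ is fixed by $x$ exactly when it is a line of $\calQ_x$, combined with the structural fact that in a subquadrangle of order $(s^\prime, t^\prime)$ each point is incident with precisely $t^\prime + 1$ subquadrangle-lines (and, dually, each line with precisely $s^\prime + 1$ subquadrangle-points); everything else is the routine reduction modulo $p$ that pervades this section.
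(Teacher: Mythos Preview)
Your proposal is correct and is essentially the same orbit-counting argument as the paper's proof: the paper picks a fixed line first to obtain $s^\prime\equiv s\pmod p$ and then invokes duality for $t^\prime\equiv t\pmod p$, whereas you start with a fixed point and reverse the order. The content is identical.
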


\begin{proof}
 Let $\ell$ be any line fixed by $x$.  By hypothesis, there are exactly $s^\prime + 1$ points fixed by $x$ on $\ell$, and hence there are $(s+1) - (s^\prime + 1) = (s - s^\prime)$ points on $\ell$ that are not fixed by $x$.  These remaining $(s - s^\prime)$ points are partitioned into orbits of $\langle x \rangle$ of size $p$, and hence $s^\prime \equiv s \pmod p$.  By duality, $t^\prime \equiv t \pmod p$.
\end{proof}

\begin{lem}
 \label{lem:type4values}
 Let $x$ be an automorphism of $\mathcal{Q}$ of order $p$, where $p$ is a prime.  If $\calQ_x$ has type $(4)$ and is a proper  subquadrangle of order $(s, t^\prime)$, then
 \begin{align*}
  \alpha_0(x) &= (s+1)(st^\prime + 1) \\
  \alpha_1(x) &= 0 \\
  \alpha_2(x) &= s(s+1)(t - t^\prime)\\
 \end{align*}
and
\begin{align*}
 \beta_0(x) &= (t^\prime + 1)(st^\prime + 1) \\
 \beta_1(x) &= (t - t^\prime)(s+1)(st^\prime + 1) \\
 \beta_2(x) &= (t+1)(st + 1) - (t(s+1) - st^\prime + 1)(st^\prime + 1). \\
\end{align*}
\end{lem}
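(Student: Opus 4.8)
The plan is to reduce everything to the single assertion $\alpha_1(x) = 0$, after which the remaining five equalities are short bookkeeping. Since $\calQ_x$ is by hypothesis a generalized subquadrangle of order $(s,t')$, whose point set is exactly $\calP_0(x)$ and whose line set is exactly $\calL_0(x)$, Lemma \ref{lem:GQbasics}(i) applied to $\calQ_x$ gives $\alpha_0(x) = (s+1)(st'+1)$ and $\beta_0(x) = (t'+1)(st'+1)$. Granting $\alpha_1(x) = 0$, Lemma \ref{lem:pointlinecounts} yields $\beta_1(x) = (1+t)\alpha_0(x) - (1+s)\beta_0(x) = (st'+1)(s+1)(t-t')$, and then $\alpha_2(x) = |\calP| - \alpha_0(x) = (s+1)(st+1) - (s+1)(st'+1) = s(s+1)(t-t')$, while $\beta_2(x) = |\calL| - \beta_0(x) - \beta_1(x)$ simplifies to the claimed expression using the identity $(t'+1) + (t-t')(s+1) = t(s+1) - st' + 1$. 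So the entire content is the equality $\alpha_1(x) = 0$.

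To prove this I would first record the elementary structure of a ``full'' subquadrangle, i.e. one sharing the parameter $s$ with $\calQ$ (this is the present case, and it is exactly where the hypothesis on the first parameter is used; compare \cite[2.2]{fgq}). If $m$ is a line of $\calQ$ belonging to $\calQ_x$, then $m$ carries $s+1$ points of $\calQ_x$ and only $s+1$ points of $\calQ$, so every point of $m$ already lies in $\calQ_x$. Equivalently, a point $P \notin \calP_0(x)$ lies on no line of $\calL_0(x)$, and a line of $\calQ$ not belonging to $\calQ_x$ meets $\calP_0(x)$ in at most one point (two such points would be collinear, hence joined by a line of $\calQ_x$ by the previous remark applied inside $\calQ_x$, forcing that joining line into $\calQ_x$). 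Consequently, for any point $P \notin \calP_0(x)$, the GQ Axiom produces on each line of $\calQ_x$ exactly one point collinear with $P$ (uniqueness because $\calQ$ has no triangles), so the set $O_P$ of points of $\calQ_x$ collinear with $P$ is an ovoid of $\calQ_x$, of size $st'+1 \ge 2$.

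Now suppose, for contradiction, that $P \in \calP_1(x)$. Since $\calP_0(x)$ is $x$-invariant, both $P$ and $P^x$ lie outside $\calQ_x$; let $\ell$ be the (unique) line joining the distinct collinear points $P$ and $P^x$, which does not belong to $\calQ_x$. Because $x$ fixes every point of $\calQ_x$ and is an automorphism, $O_{P^x} = (O_P)^x = O_P$. Pick any $Q \in O_P$; then $Q$ is a point of $\calQ_x$, hence distinct from $P$ and $P^x$, and $Q$ is collinear with both of them, so $\{P, P^x, Q\}$ would be a triangle unless $Q$ lies on $\ell$ --- hence $Q \in \ell$. As this holds for all $st'+1 \ge 2$ points of $O_P$, the line $\ell$ meets $\calP_0(x)$ in at least two points, contradicting the previous paragraph. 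Therefore $\calP_1(x) = \emptyset$, i.e. $\alpha_1(x) = 0$, and the lemma follows.

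I expect the only real obstacle to be getting the full-subquadrangle geometry exactly right --- in particular, that a line outside $\calQ_x$ meets its point set in at most one point, equivalently that $O_P$ is a genuine ovoid and not merely a partial ovoid. This rests essentially on $s$ being shared between $\calQ$ and $\calQ_x$ together with the absence of triangles, and it is the step where one must be slightly careful about the grid case $t'=1$ (harmless here, since only $st'+1 \ge 2$ is used). Everything else is routine application of Lemma \ref{lem:GQbasics}(i), Lemma \ref{lem:pointlinecounts}, and the algebraic identity above.
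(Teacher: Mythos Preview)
Your proof is correct and follows essentially the same route as the paper's: reduce everything to $\alpha_1(x)=0$, use that every point on a fixed line is fixed (since $s'=s$), and derive a forbidden triangle from a fixed point $Q$ collinear with both $P$ and $P^x$. The paper's version is slightly more economical---it picks one fixed line $\ell$, takes the unique $Q\in\ell$ with $Q\sim P$, and observes that the line $\ell'$ through $P,Q$ and its image $(\ell')^x$ both pass through $Q$, forcing $P\not\sim P^x$---whereas you develop the full ovoid $O_P$ first; but the underlying idea is identical.
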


\begin{proof}
 First, $\alpha_0(x) = (s+1)(st^\prime + 1)$, since $\calQ_x$ is a subquadrangle of order $(s, t^\prime)$.  Similarly, $\beta_0(x) = (t^\prime + 1)(st^\prime + 1)$.  We will now show that $\alpha_1(x) = 0$.  Let $P$ be a point that is not fixed by $x$.  If $\ell$ is a line fixed by $x$, then, since $\calQ_x$ is a subquadrangle of order $(s, t^\prime)$, all points incident with $\ell$ are fixed by $x$. This means that $P$ is not incident with $\ell$, and so, by the GQ Axiom, there exists a unique point $Q$ on $\ell$ with which $P$ is collinear.  Let $\ell^\prime$ be the line incident with both $P$ and $Q$.  Since $Q$ is incident with $\ell$, $Q$ is fixed.  The line $\ell^\prime$ cannot be fixed, since $P$ is not fixed by $x$.  However, $(\ell^\prime)^x$ is also incident with $Q$, and, by the GQ Axiom, there are no triangles, which means $P \not\sim P^x$ and $\alpha_1(x) = 0$.  The value of $\beta_1(x)$ now follows from Lemma \ref{lem:pointlinecounts}, and the values of $\alpha_2(x)$ and $\beta_2(x)$ follow from
 \[|\calP| = \alpha_0(x) + \alpha_1(x) + \alpha_2(x)\]
 and
 \[|\calL| = \beta_0(x) + \beta_1(x) + \beta_2(x),\]
 respectively.
\end{proof}

\begin{lem}
 \label{lem:type4s=sprime}
 Let $x$ be an automorphism of $\mathcal{Q}$ of order $p$, where $p$ is a prime.  If $\calQ_x$ has type $(4)$ and $s^\prime = s$, then $s+t$ divides $st^\prime(st+1)$.
\end{lem}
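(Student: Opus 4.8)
The plan is to combine Benson's Lemma (Lemma \ref{lem:Benson}) with the explicit point counts supplied by Lemma \ref{lem:type4values}. First I would observe that since $x$ has order $p > 1$, it cannot fix every point, so $\calQ_x$ is a \emph{proper} subquadrangle; combined with the hypothesis $s^\prime = s$, this forces $\calQ_x$ to have order $(s, t^\prime)$ for some $t^\prime < t$, so Lemma \ref{lem:type4values} applies and gives $\alpha_0(x) = (s+1)(st^\prime + 1)$ and $\alpha_1(x) = 0$.

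Next I would feed these values into Benson's Lemma, which reads $(t+1)\alpha_0(x) + \alpha_1(x) \equiv st+1 \pmod{s+t}$, to obtain
\[ (t+1)(s+1)(st^\prime + 1) \equiv st + 1 \pmod{s+t}. \]
The key algebraic simplification is that $(s+1)(t+1) = st + s + t + 1 \equiv st + 1 \pmod{s+t}$, since $s + t \equiv 0$ modulo itself. Substituting this in, the congruence becomes $(st+1)(st^\prime + 1) \equiv st + 1 \pmod{s+t}$.

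Finally I would subtract $st+1$ from both sides and factor: $(st+1)(st^\prime + 1) - (st+1) = st^\prime(st+1) \equiv 0 \pmod{s+t}$, which is exactly the claim that $s+t$ divides $st^\prime(st+1)$. There is no real obstacle here; the only thing to be careful about is justifying that the subquadrangle is proper (so that Lemma \ref{lem:type4values} is applicable) and that the reduction $(s+1)(t+1)\equiv st+1$ is performed correctly. The argument is otherwise a short, direct computation.
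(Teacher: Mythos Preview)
Your proposal is correct and follows essentially the same approach as the paper: both combine Lemma~\ref{lem:type4values} with Benson's Lemma to obtain $(t+1)(s+1)(st'+1)\equiv st+1\pmod{s+t}$ and then simplify. You simply spell out the simplification in more detail and make explicit why the subquadrangle is proper.
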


\begin{proof}
 By Lemmas \ref{lem:Benson} and \ref{lem:type4values},
 \[(t+1)(s+1)(st^\prime +1) \equiv st + 1 \pmod {s+t}.\]
 The result follows after simplification of this expression.
\end{proof}
 
\begin{lem}
 \label{lem:type4largep}
 Let $p$ be a prime such that $p \ge s$, and suppose $x$ is an automorphism of $\calQ$ of order $p$ such that $\calQ_x$ has type $(4)$.  Then $s^\prime = s$, $t^\prime < t$, $t^\prime \equiv t \pmod p$, and $s+t$ divides $st^\prime (st+1)$.
\end{lem}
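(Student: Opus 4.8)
The plan is to exploit the hypothesis $p \ge s$, which pins down $s'$ almost immediately, and then to read off the remaining conclusions from results already proved. Since $p$ is prime we have $x \ne 1$, so $\calQ_x$ is a \emph{proper} substructure of $\calQ$; as $\calQ_x$ has type $(4)$, it is a generalized subquadrangle of order $(s',t')$, and by the remark following Lemma \ref{lem:fixedtypes} this is a genuine generalized quadrangle (possibly a grid or dual grid), so $1 \le s' \le s$ and $1 \le t' \le t$. First I would apply Lemma \ref{lem:type4cong} to get $s' \equiv s \pmod p$ and $t' \equiv t \pmod p$. Then, since $0 \le s - s' \le s - 1 < s \le p$ and $p \mid (s - s')$, the only possibility is $s - s' = 0$, i.e.\ $s' = s$; this immediately also gives $t' \equiv t \pmod p$ from the congruence already noted.

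Next I would rule out $t' = t$. If $t' = t$ held, then (with $s' = s$) $\calQ_x$ would be a generalized subquadrangle of order $(s,t)$, so by Lemma \ref{lem:GQbasics}(i) its point set would have size $(s+1)(st+1) = |\calP|$; being contained in $\calP$, it would equal $\calP$, and likewise the line set would equal $\calL$, forcing $\calQ_x = \calQ$ and hence $x = 1$, a contradiction. Therefore $t' < t$. Finally, since $s' = s$, Lemma \ref{lem:type4s=sprime} yields that $s + t$ divides $s t'(st+1)$, which completes the argument.

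I do not expect any real obstacle here: the proof is essentially a divisibility squeeze on $s - s'$ together with citations of Lemmas \ref{lem:type4cong} and \ref{lem:type4s=sprime}. The only points needing a line of care are (a) the (routine) counting argument that a subquadrangle of full order $(s,t)$ must be the whole quadrangle, used to force $t' < t$, and (b) the convention that a type-$(4)$ fixed substructure is an honest generalized quadrangle so that $s' \ge 1$, which is what makes the inequality $s - s' < p$ strict.
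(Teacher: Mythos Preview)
Your proposal is correct and follows essentially the same route as the paper: apply Lemma~\ref{lem:type4cong} to get $s'\equiv s\pmod p$, use $p\ge s>s-s'\ge 0$ to force $s'=s$, and then invoke Lemmas~\ref{lem:type4cong} and~\ref{lem:type4s=sprime} for the remaining conclusions. The paper's proof is terser and does not spell out the argument for $t'<t$; your explicit counting via Lemma~\ref{lem:GQbasics}(i) is a welcome clarification but not a different method.
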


\begin{proof}
 By Lemma \ref{lem:type4cong}, $s^\prime \equiv s \pmod p$, and, since $p \ge s \ge s^\prime$, we have $s^\prime = s$.  The result now follows from Lemmas \ref{lem:type4cong} and \ref{lem:type4s=sprime}.
\end{proof}

\begin{lem}
 \label{lem:s+1prime}
 Let $\calQ$ be a generalized quadrangle of order $(s,t)$, where $s > t$ and $s+1$ is prime.  If $x$ is an automorphism of $\calQ$ of order $s+1$, then $\calQ_x$ has type $(1^\prime)$.
\end{lem}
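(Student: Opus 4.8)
The plan is a case analysis on the type of the fixed substructure $\calQ_x$ as listed in Lemma~\ref{lem:fixedtypes}: I would use the congruence results collected earlier in this section to rule out every possibility except type $(1^\prime)$. Throughout, write $p := s+1$, so that $p$ is prime, $x$ has order $p$, and $s \equiv -1 \pmod p$; the hypothesis $s > t$ enters precisely through the resulting inequalities $1 \le t < s = p-1$, which say that $t$ and $t+1$ are nonzero residues modulo $p$ and that $\max\{s,t\} = s$.

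First I would dispose of the ``small'' types. Type $(1)$ is impossible by Lemma~\ref{lem:type1cong}, since it would force $p \mid t+1$ with $0 < t+1 < p$. Type $(2)$ is impossible by Lemma~\ref{lem:type2cong}: if $\alpha_0(x) = 1$ we would get $s+1 \equiv 1 \pmod p$, i.e.\ $p \mid s$, which is impossible; and if $\alpha_0(x) \ge 2$ we would get $t+1 \equiv 1 \pmod p$, i.e.\ $p \mid t$, also impossible. Type $(2^\prime)$ is ruled out the same way via parts (iii) and (iv) of Lemma~\ref{lem:type2cong}. Types $(3)$ and $(3^\prime)$ are excluded by the last sentence of Lemma~\ref{lem:type3fixed}, which would give $p < \max\{s,t\} = s$, contradicting $p = s+1$. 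For type $(0)$, Lemma~\ref{lem:type0cong} leaves the two options $p \mid t+1$ (impossible as above) and $st+1 \equiv 0 \pmod p$; since $s \equiv -1 \pmod p$ the second says $t \equiv 1 \pmod p$, hence $t = 1$, contradicting the thickness of $\calQ$ — this is the only point where thickness is used.

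The one case that takes a little care is type $(4)$. Suppose $\calQ_x$ were a generalized subquadrangle of order $(s^\prime, t^\prime)$. Since $p = s+1 \ge s$, Lemma~\ref{lem:type4largep} applies and gives $s^\prime = s$, $t^\prime < t$, and $t^\prime \equiv t \pmod p$; but $1 \le t^\prime < t < p$, so $t^\prime$ and $t$ are distinct residues modulo $p$, contradicting $t^\prime \equiv t \pmod p$. Thus every type other than $(1^\prime)$ has been excluded, and Lemma~\ref{lem:fixedtypes} forces $\calQ_x$ to have type $(1^\prime)$. I do not expect any genuine difficulty here: every step is an immediate consequence of a lemma already established in Section~\ref{sect:autprime}, and the only thing requiring attention is the bookkeeping of which hypothesis ($s > t$, the primality of $s+1$, or thickness) eliminates which type.
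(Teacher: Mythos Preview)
Your case analysis is correct and mirrors the paper's own proof almost exactly: both run through the types in Lemma~\ref{lem:fixedtypes} and eliminate each one using the congruence lemmas of Section~\ref{sect:autprime}, with Lemma~\ref{lem:type4largep} handling type~$(4)$. One small remark: you explicitly invoke thickness to dispose of $t=1$ in the type~$(0)$ case, whereas the lemma as stated omits that hypothesis; the paper's argument there deduces $(s+1)\mid s(t-1)$ and then claims $(s+1)\nmid(t-1)$ because $s+1>t-1$, which likewise tacitly requires $t>1$, so your formulation is if anything the more careful one.
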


\begin{proof}
 Assume first that $\calQ_x$ has type $(0)$.  Since $s > t$, $(s+1) \nmid (t+1)$, and hence $st + 1 \equiv 0 \pmod {s+1}$ by Lemma \ref{lem:type0cong}.  However, this implies that 
 \[s(t-1) \equiv (st+1) - (s+1) \equiv 0 \pmod {s+1}.\]
 By Euclid's Lemma, either $(s+1) \mid s$ or $(s+1) \mid (t-1)$, which is impossible since $s+1 > s, t-1$.  Hence $\calQ_x$ cannot have type $(0)$.\\
 If $\calQ_x$ has type $(1)$, then $t+1 \equiv 0 \pmod {s+1}$ by Lemma \ref{lem:type1cong}, which is impossible since $s > t$.  If $\calQ_x$ has either type $(2)$ or type $(2^\prime)$, then either $s+1 \equiv 1 \pmod {s+1}$ or $t+1 \equiv 1 \pmod {s+1}$ by Lemma \ref{lem:type2cong}, again a contradiction.  If $\calQ_x$ has either type $(3)$ or type $(3^\prime)$, then either $s+1 \equiv 2 \pmod {s+1}$ or $t+1 \equiv 2 \pmod {s+1}$ by Lemma \ref{lem:type3fixed}, another contradiction.  Finally, if $\calQ_x$ has type $(4)$, then by Lemma \ref{lem:type4largep} we have $s = s^\prime$ and $(s+1)$ divides $(t - t^\prime)$.  However, $t - t^\prime$ is both smaller than $s+1$ and nonzero, a contradiction.
\end{proof}

\begin{lem}
 \label{lem:primerestriction}
 Let $p$ be a prime that divides the order of the automorphism group of a finite generalized quadrangle $\calQ$ of order $(s,t)$.  If $p \nmid (st+1)$, then $p \le \max\{s+1, t+1\}$.
\end{lem}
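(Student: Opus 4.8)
The plan is as follows. Since $p$ divides $|\Aut(\calQ)|$, Cauchy's theorem supplies an automorphism $x$ of $\calQ$ of order $p$, so the machinery of Section~\ref{sect:autprime} applies. Assume, for contradiction, that $p > \max\{s+1,t+1\}$; the goal is to show that every one of the eight possible types of $\calQ_x$ from Lemma~\ref{lem:fixedtypes} is then impossible.

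First I would record the elementary divisibility facts forced by the assumption: since $1 \le s,t$ and $s+1,t+1 < p$, the prime $p$ divides none of $s$, $t$, $s+1$, $t+1$, and by hypothesis $p \nmid (st+1)$ as well. With these in hand the ``small'' types fall quickly. Type $(0)$ is excluded by Lemma~\ref{lem:type0cong}, which would force $p \mid (s+1)$ or $p \mid (st+1)$. Types $(1)$ and $(1')$ are excluded by Lemma~\ref{lem:type1cong}, which forces $p\mid(t+1)$ respectively $p\mid(s+1)$. Types $(2)$ and $(2')$ are excluded by Lemma~\ref{lem:type2cong}, since each of its four sub-cases forces $p\mid s$ or $p\mid t$. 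Types $(3)$ and $(3')$ are excluded by the last clause of Lemma~\ref{lem:type3fixed}, which would give $p < \max\{s,t\} < \max\{s+1,t+1\} < p$.

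The remaining case, type $(4)$, is where the only real work lies. Here $\calQ_x$ is a (generalized) subquadrangle of some order $(s',t')$ with $0 \le s' \le s$ and $0 \le t' \le t$, and Lemma~\ref{lem:type4cong} gives $s' \equiv s$ and $t' \equiv t \pmod p$. Since $s-s'$ and $t-t'$ are nonnegative and strictly less than $p$, we get $s'=s$ and $t'=t$; then $\calQ_x$ and $\calQ$ have equal point and line counts by Lemma~\ref{lem:GQbasics}(i), so $\calQ_x = \calQ$, whence $x$ fixes every point and $x = 1$, contradicting $|x| = p$. Since no type survives, the assumption $p > \max\{s+1,t+1\}$ is untenable and the lemma follows.

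The step I expect to need the most care is type $(4)$ in its degenerate guises, where $\calQ_x$ may be a grid or a dual grid (so $s' = 1$ or $t' = 1$): one must confirm that Lemma~\ref{lem:type4cong} still applies there, which it does by the remark following Lemma~\ref{lem:fixedtypes}, and that the conclusion $s'=s$, $t'=t$ still forces $\calQ_x=\calQ$. Everything else is a direct appeal to the congruence lemmas already established.
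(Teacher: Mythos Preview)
Your proposal is correct and follows essentially the same approach as the paper: assume $p>\max\{s+1,t+1\}$, take $x$ of order $p$, and eliminate each type of $\calQ_x$ via the congruence lemmas of Section~\ref{sect:autprime}. The only cosmetic differences are that the paper disposes of the non-thick case separately (via the explicit wreath-product automorphism group of a grid) and packages types $(1)$--$(3')$ into a single appeal to Lemma~\ref{lem:neither1modp}, and for type $(4)$ it quotes Lemma~\ref{lem:type4largep} to get $s'=s$, $t'<t$, $p\mid(t-t')$, whereas you argue directly from Lemma~\ref{lem:type4cong} that $s'=s$ and $t'=t$, forcing $\calQ_x=\calQ$; both routes yield the same contradiction.
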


\begin{proof}
 First, if $t = 1$, then $\calQ$ is a grid with automorphism group isomorphic to the wreath product $\Sym(s+1) \Wr 2$, and so $p \le s + 1$, with an analogous result holding in the dual grid case.  Hence we may assume that $\calQ$ is a thick generalized quadrangle.   Assume $p > (s+1)$ and $p > (t+1)$.  Let $x$ be an automorphism of $\calQ$ of order $p$.  Since $\calQ$ is thick, we have $s + 1 \not\equiv 0, 1, 2 \pmod p$ and $t + 1 \not\equiv 0, 1, 2 \pmod p$.  By Lemma \ref{lem:neither1modp}, if $p \nmid (st+1)$, then $\calQ_x$ has type $(4)$.  However, since $p > s+1$, by Lemma \ref{lem:type4largep} we have that $s = s^\prime$ and $p \mid (t - t^\prime)$, a contradiction since $p > s+1 > t - t^\prime > 0$.  The result follows.
\end{proof}

It should be noted that this last lemma yields a definitive list of primes that could be orders of automorphisms of a generalized quadrangle $\calQ$ without knowing any information about $\calQ$ other than its order $(s,t)$.

\section{Proof of the inequality}
\label{sect:proofineq}

This section is devoted to the proof of Theorem \ref{thm:main}.

\begin{proof}[Proof of Theorem \ref{thm:main}]
 Let $\calQ$ be a thick generalized quadrangle of order $(s,t)$, where $s > t$ and $s+1$ is prime, and let $\calQ$ have an automorphism $x$ of order $s+1$.  By Lemma \ref{lem:s+1prime}, $\calQ_x$ has type $(1^\prime)$, thus no points are fixed by $x$ and at least one line is fixed by $x$.  Let $\ell$ be a line fixed by $x$.  Since $\calQ_x$ has type $(1^\prime)$, no points incident with $\ell$ are fixed and all fixed lines are pairwise nonconcurrent, and so the lines concurrent with $\ell$ are divided into $t$ distinct orbits of $\langle x \rangle$ of size $s+1$.  If $\ell^\prime$ is any other fixed line, then $|\{\ell, \ell^\prime\}^\perp| = s + 1$, i.e., there is a unique $\langle x \rangle$-orbit of lines concurrent with $\ell$ that is also concurrent with $\ell^\prime$.  By the Pigeonhole Principle, one of the $t$ different $\langle x \rangle$-orbits of lines concurrent with $\ell$, which we name $X$, is also concurrent with at least $\left\lceil (\beta_0(x) - 1)/t \right\rceil$ fixed lines other than $\ell$.  If $Y$ is the set of $\left\lceil (\beta_0(x) - 1)/t \right\rceil + 1$ lines that are all nonconcurrent, fixed by $x$, and incident with each line in $X$, then, by Lemma \ref{lem:PayneIneq},
 \[s \left\lceil \frac{\beta_0(x) - 1}{t} \right\rceil \le t^2.\] 
To finish the proof, we provide a lower bound on $\beta_0(x)$.  Since $s+1$ is prime, by Lemma \ref{lem:alphaicong} we have
\[ \beta_0(x) \equiv (t+1)(st+1) \pmod {s+1},\] which equivalently implies that
\[ \beta_0(x) \equiv -(t^2 - 1) \pmod {s+1}.\]  Thus, there exists some $k \in \N$ such that 
\[\beta_0(x) = k(s+1) - (t^2 - 1).\]
If $k < t^2/(s+1)$, then
\[ \beta_0(x) = k(s+1) - (t^2 - 1) < \frac{t^2}{s+1}(s+1) - (t^2 - 1) = 1,\]
which implies that $\beta_0(x) < 1$, a contradiction, since $\calQ_x$ has type $(1^\prime)$.  Thus $k \ge \left\lceil \frac{t^2}{s+1} \right\rceil,$ and so 
\[ \left\lceil \frac{t^2}{s+1}\right\rceil (s+1) - (t^2 - 1) \le \beta_0(x).\]  This means
\[s\left( \left\lceil \left\lceil\frac{t^2}{s+1} \right\rceil \cdot \frac{s+1}{t}\right\rceil -t\right) = s\left\lceil \frac{\left(\left\lceil\frac{t^2}{s+1} \right\rceil(s+1) - (t^2- 1)\right) - 1}{t}\right\rceil \le s\left\lceil \frac{\beta_0(x) - 1}{t} \right\rceil \le t^2.\]
Simplifying, we have
 \[ s \left\lceil \left\lceil \frac{t^2}{s+1}\right\rceil\left(\frac{s+1}{t} \right) \right\rceil \le t(s+t),\]
 as desired.
\end{proof}

\section{Consequences of the inequality}
\label{sect:conseq}

In this section, we present some consequences of Theorem \ref{thm:main}.  First, we have immediately Corollary \ref{cor:notpointtrans}, which says that a generalized quadrangle of order $(s,t)$, where $s > t$, $s+1$ is prime, and 
\[ s \left\lceil \left\lceil \frac{t^2}{s+1}\right\rceil\left(\frac{s+1}{t} \right) \right\rceil > t(s+t),\]
is not point-transitive.

\begin{proof}[Proof of Corollary \ref{cor:notpointtrans}]
Assume that $\calQ$ has order $(s,t)$, where $s > t > 1$ and $s+1$ is prime.  If $\calQ$ has an automorphism group $G$ that is transitive on points and $P$ is a point of $\calQ$, then
\[ |G| = |\calP||G_P| = (s+1)(st+1)|G_P|.\]
Since the prime $(s+1)$ divides $|G|$, $G$ must have an element of order $s+1$, which means $s$ and $t$ must satisfy the hypotheses of Theorem \ref{thm:main}.  The result follows.
\end{proof}

We can also now prove Corollary \ref{cor:notlinetrans}.

\begin{proof}[Proof of Corollary \ref{cor:notlinetrans}]
 This follows immediately from point-line duality and Corollary \ref{cor:notpointtrans}.
\end{proof}

At first glance, the inequality 
\[s \left\lceil \left\lceil \frac{t^2}{s+1}\right\rceil\left(\frac{s+1}{t} \right) \right\rceil \le t(s+t)\]
does not seem like much of a restriction.  However, as we will see, when $s$ is much larger than $t$, there are often situations when the ceiling functions contained in the inequality make a drastic difference.  Corollary \ref{cor:moreexplicit} shows that one implication of Theorem \ref{thm:main} is that, if $s+1$ is prime and if there exists a natural number $n$ such that
\[ \frac{t^2}{n+1} + t \le s + 1 < \frac{t^2}{n},\]
then a generalized quadrangle of order $(s,t)$ cannot have an automorphism of order $s+1$ and cannot be point-transitive. 

\begin{proof}[Proof of Corollary \ref{cor:moreexplicit}]
 Assume that $s+1$ is prime and that 
 \[ \frac{t^2}{n+1} + t \le s + 1 < \frac{t^2}{n}\]
 for some natural number $n$.  First, since 
 \[ t < \frac{t^2}{n+1} + t \le s + 1,\]
 $s \ge t$.  If $s = t$, then $t^2/(n+1) \le 1$, which implies that 
 \[ s + 1 < \frac{t^2}{n} = \left(\frac{t^2}{n+1}\right)\left(\frac{n+1}{n}\right) \le 1 + \frac{1}{n},\]
 a contradiction.  Hence $s > t$.  On the other hand, since 
  \[ \frac{t^2}{n+1} < \frac{t^2}{n+1} + t \le s + 1 < \frac{t^2}{n},\]
  we have  
  \[ n < \frac{t^2}{s+1} < n+1,\]
  and so  
  \[ \left\lceil \frac{t^2}{s+1} \right\rceil = n + 1.\]
  Moreover, since $t^2/(n+1) + t \le s + 1$
  \[ (n+1)(s+1) \ge t^2 + (n+1)t,\] and, since $t > 1$,
  \[ (n+1)s \ge t^2 + (n+1)(t-1) > t^2.\]
  Thus,        
  \begin{align*}
   s \left\lceil \left\lceil \frac{t^2}{s+1}\right\rceil\left(\frac{s+1}{t} \right) \right\rceil &= \left\lceil (n+1)\frac{s+1}{t} \right\rceil \\
   &\ge s \left\lceil \frac{t^2 + (n+1)t}{t} \right\rceil\\
   &= s(t + (n+1))\\
   &= ts + (n+1)s\\
   &> ts + t^2\\
   &= t(s + t),
  \end{align*}
and, by Theorem \ref{thm:main} and Corollary \ref{cor:notpointtrans}, such a generalized quadrangle cannot have an automorphism of order $s+1$ and cannot be point-transitive.
\end{proof}

One particular application of the inequality is Corollary \ref{cor:q^2-q}, which states that, if $\calQ$ is a generalized quadrangle of order $(q^2 - nq, q)$ and $q^2 - nq + 1$ is prime with $q > 2n$, then $\calQ$ is not point-transitive.  These conditions apply to numerous potential generalized quadrangles whose existence is not known, for instance orders $(12,4)$, $(30, 6)$, $(42, 7)$, and $(72, 9)$; see Appendix \ref{app:calc} for many more instances.

\begin{proof}[Proof of Corollary \ref{cor:q^2-q}]
 Let $\calQ$ be a generalized quadrangle of order $(q^2 - nq, q)$, where $q > 2n$ and $q^2 - nq + 1$ is prime.  In this instance,
 \[ \left\lceil \frac{q^2}{q^2 - nq + 1} \right\rceil = 2,\]
 and 
 \[ \left\lceil 2 \frac{q^2 - nq + 1}{q} \right\rceil = \left\lceil 2q - 2n + \frac{2}{q} \right\rceil = 2q - 2n + 1,\]
 and so
 \begin{align*}
  (q^2 - q) \left\lceil \left\lceil \frac{q^2}{q^2 - q + 1} \right\rceil \left(\frac{q^2 - q + 1}{q}\right) \right\rceil &= (q^2 - q)(2q-2n +1)\\
  &= q\left((q-1)(2q-2n+1)\right)\\
  &> q \cdot q^2\\
  &> q\left((q^2 - nq) + q \right)
 \end{align*}
when $q > 2n$, and hence by Theorem \ref{thm:main} and Corollary \ref{cor:notpointtrans}, such a generalized quadrangle cannot have an automorphism of order $q^2 - nq + 1$ and cannot be point-transitive.
\end{proof}

It is unknown whether $q^2 - nq + 1$ is prime for infinitely many positive integer values of $q$ for a fixed $n$.  However, the following conjecture from number theory supports this conclusion.

\begin{conj}\cite{Bouniakowsky}
 Let $f(x) = a_nx^n + \cdots + a_1x + a_0$ be a polynomial with integer coefficients.  The set $\{k \in \mathbb{Z} : f(k) \text{ is prime}\}$ is infinite if the following three conditions hold:
 \begin{enumerate}[(i)]
  \item $a_n = 1$,
  \item $f$ is irreducible over $\mathbb{Z}$,
  \item The set of integers $f(\mathbb{Z}) = \{f(n) : n \in \mathbb{Z}\}$ has greatest common divisor $1$.
 \end{enumerate}
\end{conj}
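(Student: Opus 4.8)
This statement is the higher-degree generalization of Dirichlet's theorem on primes in arithmetic progressions (which is the case $\deg f = 1$), and it is the single-polynomial instance both of Schinzel's Hypothesis H and of the quantitative Bateman--Horn conjecture; consequently no unconditional proof is known when $\deg f \ge 2$ (the case relevant to the quadratic $q^2-nq+1$ of Corollary \ref{cor:q^2-q}), and what follows is really a description of the available lines of attack together with the single obstruction that each of them runs into.

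The plan would begin with the heuristic that makes the assertion believable. For each prime $p$ write $\omega(p)$ for the number of residue classes $k \bmod p$ with $p \mid f(k)$; condition (iii) is equivalent to $\omega(p) < p$ for every $p$ (since $\omega(p) = p$ would say $p$ divides every value of $f$, contradicting $\gcd(f(\mathbb{Z})) = 1$), while condition (ii) forces $\sum_{p\le X}\omega(p)\sim\pi(X)$, so that the singular series
\[
\mathcal{S}(f)\;=\;\prod_{p}\frac{1-\omega(p)/p}{1-1/p}
\]
converges to a strictly positive constant. The Bateman--Horn conjecture predicts $\#\{k\le N : f(k)\text{ prime}\}\sim \frac{\mathcal{S}(f)}{\deg f}\cdot\frac{N}{\log N}$, which is in particular unbounded, so a proof ``only'' needs to make this count positive for arbitrarily large $N$.

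To do that I would aim for a lower bound of the shape $\sum_{k\le N}\Lambda(f(k))\gg N$ by one of two standard routes. Route (a) is sieve-theoretic: run the Brun or Selberg sieve on the sequence $(f(k))_{k\le N}$, the crucial input being a level of distribution, i.e.\ control of the discrepancy $\sum_{k\le N,\ d\mid f(k)}1 - \frac{\rho(d)}{d}N$ uniformly for $d$ up to some $N^{\theta}$, where $\rho$ is the multiplicative function with $\rho(p)=\omega(p)$ and the equidistribution of the roots of $f$ modulo $d$ is what must be exploited. Route (b) is the circle method together with a Linnik-type dispersion argument, detecting the von Mangoldt function on the sequence through its Type I / Type II (bilinear) decomposition. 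In either case one first checks the local densities (immediate from (iii)), then establishes the required equidistribution, and finally tries to upgrade ``almost prime'' to ``prime.''

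The hard part --- and the reason the conjecture remains open for \emph{every} $f$ of degree at least two --- is that last step: the parity problem of sieve theory. A sieve cannot by itself distinguish integers with an even number of prime factors from those with an odd number, so Route (a) yields at best values $f(k)$ that are $P_2$ (products of at most two primes), never primes, unless it is fed genuinely bilinear information about $\Lambda$ along the sequence. When $\deg f = 1$ that information is supplied by the multiplicative structure of arithmetic progressions --- Dirichlet characters, Siegel--Walfisz, Bombieri--Vinogradov --- which is precisely what powers Dirichlet's theorem; no analogue of this Type II structure is known for a quadratic such as $q^2-nq+1$, and for the same reason Route (b) stalls at its Type II sums. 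So I would not expect to complete a proof: the realistic deliverable is the conditional statement together with the Iwaniec-type fact that $f(k)$ is $P_2$ for infinitely many $k$, which is the strongest thing currently attainable and already enough to make the ``asymmetry'' heuristic the authors invoke persuasive.
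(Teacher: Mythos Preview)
The statement is labeled \texttt{conj} in the paper --- it is Bouniakowsky's conjecture, cited as such, and the paper offers no proof whatsoever; it merely invokes the conjecture as heuristic evidence that $q^2 - nq + 1$ is prime infinitely often. So there is no ``paper's own proof'' to compare against.

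Your write-up correctly identifies the statement as open for all $\deg f \ge 2$, gives the standard Bateman--Horn heuristic with the singular series $\mathcal{S}(f)$, and accurately names the parity obstruction as the reason both sieve and circle-method routes stall. That is all true and well explained, but it is not a proof and you say as much. The appropriate response here is simply: this is a conjecture, the paper treats it as a conjecture, and no proof is expected or possible with current technology. Your survey of the obstructions is fine as expository context, but it should not be framed as a ``proof proposal'' --- there is nothing to prove, only something to assume or to cite.
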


For $f(x) = x^2 - nx + 1$, it is plain to see that $f$ satisfies conditions (i) and (ii) when $n \neq 2$, and $f(n) = 1$, showing (iii).  The numerical evidence in Appendix \ref{app:calc} lends evidence that there could indeed be infinitely many such pairs where $(s,t) = (q^2 - nq, q)$ that satisfy $s+t \mid st(st+1)$ where $s+1$ is prime.  

It is an interesting question as to whether generalized quadrangles of such orders actually exist.  While the asymmetry of such examples is potential evidence against existence, combinatorial regularity also does not necessitate symmetry. 

\section{Automorphisms of a generalized quadrangle of order (4,12)}
\label{sect:412}

This section is dedicated to analyzing the structure of the automorphism group of a generalized quadrangle of order $(4,12)$, if one were to exist.  Throughout this section, $\calQ$ will be a generalized quadrangle of order $(4,12)$ with point set $\calP$, line set $\calL$, and automorphism group $G$.  As in the previous sections, for $x \in G$, the \textit{type} of $\calQ_x$ refers to its designation under Lemma \ref{lem:fixedtypes}.

\begin{lem}
\label{lem:412ple7}
 If $p$ is a prime that divides $|G|$, then $p \le 7$.
\end{lem}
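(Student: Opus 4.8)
The plan rests on Lemma~\ref{lem:primerestriction}. Here $st+1 = 49 = 7^2$, whose only prime divisor is $7$, so any prime $p$ dividing $|G|$ with $p > 7$ automatically satisfies $p \nmid (st+1)$, and hence $p \le \max\{s+1,t+1\} = \max\{5,13\} = 13$. Thus the only primes exceeding $7$ that could divide $|G|$ are $11$ and $13$, and by Cauchy's theorem it suffices to show that $\calQ$ has no automorphism of order $11$ and no automorphism of order $13$.

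To rule out order $13$, I would pass to the dual quadrangle $\calQ'$, which is a thick generalized quadrangle of order $(12,4)$ with $12 > 4$ and $12+1 = 13$ prime, and whose automorphism group is $G$; an automorphism of $\calQ$ of order $13$ induces an automorphism of $\calQ'$ of order $12+1$. Theorem~\ref{thm:main} applied to $\calQ'$ would then give
\[ 12 \left\lceil \left\lceil \frac{16}{13} \right\rceil \left(\frac{13}{4}\right) \right\rceil \le 4(4+12),\]
and since $\lceil 16/13 \rceil = 2$ and $\lceil 2\cdot 13/4 \rceil = 7$, the left side is $84$ while the right side is $64$, a contradiction. Hence $13 \nmid |G|$.

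To rule out order $11$, I would assume $x \in G$ has order $11$ and eliminate every possible type of $\calQ_x$ in Lemma~\ref{lem:fixedtypes}, using that $s+1 \equiv 5$, $t+1 \equiv 2$, and $st+1 \equiv 5 \pmod{11}$. Lemma~\ref{lem:type0cong} excludes type $(0)$, since neither $s+1 \equiv t+1 \equiv 0$ nor $st+1 \equiv 0$ holds mod $11$; Lemma~\ref{lem:type1cong} excludes types $(1)$ and $(1')$, since $11 \nmid s+1$ and $11 \nmid t+1$; Lemma~\ref{lem:type2cong} excludes types $(2)$ and $(2')$, since $s+1 \not\equiv 1$ and $t+1 \not\equiv 1 \pmod{11}$; and Lemma~\ref{lem:type3fixed} excludes types $(3)$ and $(3')$, because $\calQ$ is thick, so that lemma would force $11 < \min\{s,t\} = 4$. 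Finally, in type $(4)$ we have $11 \ge s = 4$, so Lemma~\ref{lem:type4largep} forces $s' = s = 4$, $t' < t = 12$, and $t' \equiv t \equiv 1 \pmod{11}$, whence $t' = 1$; but then the same lemma requires $s+t = 16$ to divide $st'(st+1) = 196$, which is false. No type survives, so $\calQ$ has no automorphism of order $11$, and $11 \nmid |G|$. Combined with the previous paragraph, this shows every prime dividing $|G|$ is at most $7$.

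Mechanically the argument is short, being assembled from the lemmas of Section~\ref{sect:autprime} together with Theorem~\ref{thm:main}; the only place demanding care is the order-$11$ case, where one must actually run through every branch of Lemma~\ref{lem:fixedtypes} and, in the type-$(4)$ branch, push all the way to the divisibility condition $s+t \mid st'(st+1)$, which is precisely where that case collapses. The order-$13$ case is disposed of immediately by applying the main theorem to the dual.
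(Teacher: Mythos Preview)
Your argument is correct and follows essentially the same route as the paper: you invoke Lemma~\ref{lem:primerestriction} to bound $p\le 13$, dispose of $p=13$ via the main inequality applied to the dual (which is exactly Corollary~\ref{cor:notlinetrans}), and eliminate $p=11$ by running through the types of Lemma~\ref{lem:fixedtypes} using Lemmas~\ref{lem:type0cong}--\ref{lem:type3fixed} and \ref{lem:type4largep}. Your write-up is simply more explicit than the paper's in two places---you display the failed inequality $84\le 64$ for $p=13$, and in the type~$(4)$ case for $p=11$ you chase the conclusion of Lemma~\ref{lem:type4largep} down to $t'=1$ and $16\nmid 196$---but the underlying logic is identical.
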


\begin{proof}
 Let $p$ be a prime dividing $|G|$.  By Lemma \ref{lem:primerestriction}, $p \le 13$.  We know that no automorphism of order $13$ exists by Corollary \ref{cor:notlinetrans}, and so we assume $p = 11$ and let $x$ be an element of $G$ of order $11$.  By Lemma \ref{lem:type0cong}, $\calQ_x$ cannot be of type $(0)$; by Lemma \ref{lem:type1cong}, $\calQ_x$ cannot be of type $(1)$ or type $(1^\prime)$; by Lemma \ref{lem:type2cong}, $\calQ_x$ cannot be of type $(2)$ or $(2^\prime)$; by Lemma \ref{lem:type3fixed}, $\calQ_x$ cannot be of type $(3)$ or $(3^\prime)$; and, by Lemma \ref{lem:type4largep}, $\calQ_x$ cannot be of type $(4)$.  Therefore, if $p$ divides $|G|$, then $p \le 7$.
\end{proof}

\begin{lem}
 \label{lem:412p=7}
 If $x \in G$ is an element of order $7$, then $\alpha_0(x)  = 0$.
\end{lem}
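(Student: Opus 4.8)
The plan is to walk down the list of possible fixed substructures in Lemma~\ref{lem:fixedtypes} and show that, for an automorphism $x$ of order $7$, the only admissible types already force $\alpha_0(x)=0$. First I would record the relevant numerology: for $\calQ$ of order $(4,12)$ we have $s+1 = 5 \equiv 5 \pmod 7$ and $t+1 = 13 \equiv 6 \pmod 7$, so neither $s+1$ nor $t+1$ is congruent to $0$, $1$, or $2$ modulo $7$. Hence Lemma~\ref{lem:neither1modp} applies and tells us that $\calQ_x$ has either type $(0)$ or type $(4)$. If $\calQ_x$ has type $(0)$ there is nothing left to prove, since then $\calQ_x$ is empty and $\alpha_0(x)=0$; this is consistent with the lemma, as $st+1 = 49 \equiv 0 \pmod 7$.

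So the real content is to eliminate type $(4)$. Here I would invoke Lemma~\ref{lem:type4largep}, which applies because $7 = p \ge s = 4$: if $\calQ_x$ had type $(4)$, then $s^\prime = s = 4$, $t^\prime < t = 12$, $t^\prime \equiv t \equiv 5 \pmod 7$, and $s+t = 16$ divides $s t^\prime(st+1)$. The congruence $t^\prime \equiv 5 \pmod 7$ together with $0 \le t^\prime < 12$ forces $t^\prime = 5$, and then the divisibility condition becomes $16 \mid 4\cdot 5\cdot 49 = 980$, which fails. (Alternatively, one can substitute $\alpha_0(x) = (s+1)(st^\prime+1) = 105$ and $\alpha_1(x) = 0$ from Lemma~\ref{lem:type4values} into Benson's Lemma~\ref{lem:Benson} to obtain $13\cdot 105 \equiv 49 \pmod{16}$, that is $5 \equiv 1 \pmod{16}$, the same contradiction viewed differently.) Thus type $(4)$ cannot occur, so $\calQ_x$ has type $(0)$ and $\alpha_0(x) = 0$.

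I do not anticipate a serious obstacle: the congruence lemmas of Section~\ref{sect:autprime} have already done the heavy lifting by cutting the eight possible types down to two, and the remaining one is killed by a single divisibility check. The only point requiring care is the arithmetic, namely confirming that $t^\prime = 5$ is the unique admissible value and that $16$ genuinely does not divide $980$; both are routine. If one preferred to avoid Lemma~\ref{lem:neither1modp}, one could instead rule out types $(1)$, $(1^\prime)$, $(2)$, $(2^\prime)$, $(3)$, $(3^\prime)$ individually using Lemmas~\ref{lem:type1cong}, \ref{lem:type2cong}, and \ref{lem:type3fixed}, but going through Lemma~\ref{lem:neither1modp} is the cleanest route.
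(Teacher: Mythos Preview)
Your proof is correct and follows essentially the same approach as the paper. The only cosmetic difference is that the paper eliminates types $(1)$, $(1^\prime)$, $(2)$, $(2^\prime)$, $(3)$, $(3^\prime)$ by citing Lemmas~\ref{lem:type1cong}, \ref{lem:type2cong}, \ref{lem:type3fixed} individually (the alternative route you mention at the end), whereas you invoke the packaged Lemma~\ref{lem:neither1modp}; and you spell out the arithmetic that kills type~$(4)$ (forcing $t^\prime = 5$ and checking $16 \nmid 980$), while the paper simply cites Lemma~\ref{lem:type4largep} and leaves that verification to the reader.
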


\begin{proof}
 Let $x$ be an element of $G$ of order $7$.  By Lemma \ref{lem:type1cong}, $\calQ_x$ cannot be of type $(1)$ or of type $(1^\prime)$.  By Lemma \ref{lem:type2cong}, $\calQ_x$ cannot be of type $(2)$ or of type $(2^\prime)$.  By Lemma \ref{lem:type3fixed}, $\calQ_x$ cannot be of type $(3)$ or of type $(3^\prime)$ By Lemma \ref{lem:type4largep}, $\calQ_x$ cannot be of type $(4)$.  Therefore, $\calQ_x$ is of type $(0)$ and $\alpha_0(x) = 0$.
\end{proof}

\begin{lem}
 \label{lem:412syl7}
 A Sylow $7$-subgroup of $G$ has order at most $49$.
\end{lem}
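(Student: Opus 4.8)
The plan is to combine the no-fixed-points property of order-$7$ elements established in Lemma \ref{lem:412p=7} with a straightforward orbit-counting argument for the action of a Sylow $7$-subgroup on the point set $\calP$. The crucial numerical input, via Lemma \ref{lem:GQbasics}(i), is that $|\calP| = (s+1)(st+1) = 5 \cdot 49 = 245$, so that $|\calP|$ lies strictly between $7^2$ and $7^3$.

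First I would let $P$ be a Sylow $7$-subgroup of $G$ and argue by contradiction, assuming $|P| \ge 7^3$. Then I would examine the orbits of $P$ on $\calP$: each orbit has size a power of $7$ that is at most $|\calP| = 245$, hence at most $7^2 = 49$. This forces every point stabilizer in $P$ to have order at least $|P|/49 \ge 7$, so each such stabilizer is a nontrivial $7$-group and therefore contains an element $x$ of order $7$. Since $x$ fixes the point whose stabilizer it lies in, we have $\alpha_0(x) \ge 1$, contradicting Lemma \ref{lem:412p=7}, which says $\alpha_0(x) = 0$. Hence $|P| \le 49$.

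There is essentially no obstacle here beyond the observation that $245 < 7^3$: once that inequality is noted, the argument reduces to a one-line application of the pigeonhole principle together with Lemma \ref{lem:412p=7}. (One could attempt the same reasoning with the line set $\calL$ in place of $\calP$, but since $|\calL| = (t+1)(st+1) = 637 > 7^3$, the action on lines alone yields only the weaker bound $|P| \le 7^3$; this is why working with $\calP$ is essential.)
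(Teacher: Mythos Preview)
Your argument is correct and is essentially the same as the paper's: both hinge on Lemma~\ref{lem:412p=7} together with $|\calP|=245<7^3$. The only cosmetic difference is that the paper phrases the conclusion as ``$X$ acts semiregularly on $\calP$, hence $|X|\mid 245$,'' rather than arguing by contradiction via orbit sizes as you do.
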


\begin{proof}
 Let $X$ be a Sylow $7$-subgroup of $G$, and let $y \in X$.  If $y$ is not the identity and $y$ fixes any point of $\calQ$, then $y^{|y|/7}$ is an element of order $7$ that fixes a point of $\calQ$, a contradiction to Lemma \ref{lem:412p=7}.  This implies that $X$ acts semiregularly on $\mathcal{P}$, and so $|X|$ divides $|\calP| = 245$. The result follows.  
\end{proof}

\begin{lem}
 \label{lem:412p=5}
 If $h \in G$ is an element of order $5$, then $\alpha_0(h) = 0$.
\end{lem}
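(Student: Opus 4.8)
The plan is to run through the eight possible types for $\calQ_h$ given by Lemma \ref{lem:fixedtypes}, with $(s,t)=(4,12)$ and $p=5$, and show that every type either forces $\alpha_0(h)=0$ immediately or cannot occur. Types $(0)$ and $(1^\prime)$ are the benign cases: in type $(0)$ there are no fixed points, and in type $(1^\prime)$ there are no fixed points either, so $\alpha_0(h)=0$ holds by definition. (Type $(0)$ is in fact also impossible here, since by Lemma \ref{lem:type0cong} it would require $st+1=49\equiv 0\pmod 5$ or $t+1=13\equiv 0\pmod 5$, both false; but this observation is not needed for the statement.) So the work is entirely in ruling out the six remaining types.

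Five of those six fall to the same kind of congruence check used in Lemmas \ref{lem:412ple7} and \ref{lem:412p=7}. Type $(1)$ would force $t+1=13\equiv 0\pmod 5$ by Lemma \ref{lem:type1cong}; type $(2)$ would force $s+1=5\equiv 1\pmod 5$ or $t+1=13\equiv 1\pmod 5$ by Lemma \ref{lem:type2cong}, and type $(2^\prime)$ is eliminated in the same way via Lemma \ref{lem:type2cong}; type $(3)$ would force $t+1\equiv 2\pmod 5$ by Lemma \ref{lem:type3fixed}; and type $(3^\prime)$ would force $s+1=5\equiv 2\pmod 5$ by Lemma \ref{lem:type3fixed}. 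Each of these is false. (For types $(3)$ and $(3^\prime)$ one can alternatively cite the conclusion $p<\min\{s,t\}=4$ of Lemma \ref{lem:type3fixed}, which is violated by $p=5$.)

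The only case with any content is type $(4)$. Since $p=5\ge s=4$, Lemma \ref{lem:type4largep} applies and yields $s^\prime=s=4$, $t^\prime<t=12$, $t^\prime\equiv t\equiv 2\pmod 5$, and $s+t=16$ divides $st^\prime(st+1)=4\cdot 49\cdot t^\prime=196\,t^\prime$. The congruence together with the bound $1\le t^\prime<12$ leaves only $t^\prime\in\{2,7\}$, and neither $196\cdot 2=392$ nor $196\cdot 7=1372$ is a multiple of $16$, a contradiction. Hence type $(4)$ cannot occur, so $\calQ_h$ is of type $(0)$ or $(1^\prime)$ and $\alpha_0(h)=0$. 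The main (and essentially only) obstacle is organizing the type-$(4)$ case so that the divisibility constraint $s+t\mid st^\prime(st+1)$ combines with the congruence $t^\prime\equiv t\pmod p$ to eliminate every admissible value of $t^\prime$; everything else is routine bookkeeping with residues mod $5$ across the list of fixed-substructure types.
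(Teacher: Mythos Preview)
Your proof is correct and follows essentially the same approach as the paper: run through the fixed-substructure types of Lemma~\ref{lem:fixedtypes} and eliminate all but the ones forcing $\alpha_0(h)=0$ via the congruence lemmas. The paper is terser---it simply cites Lemma~\ref{lem:type4largep} to dispose of type~$(4)$ without spelling out the check on $t'\in\{2,7\}$ against the divisibility $16\mid 196t'$, and it explicitly rules out type~$(0)$ to conclude that $\calQ_h$ is of type~$(1')$---but the substance is identical.
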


\begin{proof}
  Let $h$ be an element of $G$ of order $5$.  By Lemma \ref{lem:type0cong}, $\calQ_h$ cannot be of type $(0)$.  By Lemma \ref{lem:type1cong}, $\calQ_h$ cannot be of type $(1)$.  By Lemma \ref{lem:type2cong}, $\calQ_h$ cannot be of type $(2)$ or of type $(2^\prime)$. By Lemma \ref{lem:type3fixed}, $\calQ_h$ cannot be of type $(3)$ or of type $(3^\prime)$.  By Lemma \ref{lem:type4largep}, $\calQ_h$ cannot be of type $(4)$.  Therefore, $\calQ_h$ is of type $(1^\prime)$ and $\alpha_0(h) = 0$.
\end{proof}

\begin{lem}
 \label{lem:412syl5}
 A Sylow $5$-subgroup of $G$ has order at most $5$.
\end{lem}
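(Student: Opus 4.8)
The plan is to reuse, almost verbatim, the argument given for the Sylow $7$-subgroup in Lemma~\ref{lem:412syl7}, with Lemma~\ref{lem:412p=5} taking the place of Lemma~\ref{lem:412p=7}. First I would let $X$ be a Sylow $5$-subgroup of $G$ and take an arbitrary non-identity element $y \in X$. Since $|y|$ is a power of $5$, the element $y^{|y|/5}$ has order $5$; and if $y$ fixed some point $P \in \calP$, then so would $y^{|y|/5}$, contradicting Lemma~\ref{lem:412p=5}, according to which every order-$5$ element $h$ satisfies $\alpha_0(h) = 0$. Hence no non-identity element of $X$ fixes a point, so $X$ acts semiregularly on $\calP$.

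Semiregularity means that every $X$-orbit on $\calP$ has size exactly $|X|$, and therefore $|X|$ divides $|\calP| = (s+1)(st+1) = 5 \cdot 49 = 245$. Since $|X|$ is a power of $5$ and $245 = 5 \cdot 7^2$ has $5$-part equal to $5$, it follows that $|X| \in \{1, 5\}$; in particular $|X| \le 5$, as desired.

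I do not anticipate any genuine obstacle here: the proof needs only Lemma~\ref{lem:412p=5} and the factorization $245 = 5 \cdot 7^2$. If one wished to avoid the semiregularity observation, one could instead argue from the fact that $\calQ_h$ has type $(1^\prime)$ for each order-$5$ element $h$, together with a fixed-line count via Lemma~\ref{lem:alphaicong} (note $\beta_0(h) \equiv (t+1)(st+1) \equiv 2 \pmod 5$, so $\beta_0(h) \ge 2$), but the semiregular-action argument on points is shorter and self-contained, so that is the route I would take.
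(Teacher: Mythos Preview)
Your proposal is correct and matches the paper's own proof essentially verbatim: the paper also takes a non-identity $y$ in a Sylow $5$-subgroup, passes to $y^{|y|/5}$ to invoke Lemma~\ref{lem:412p=5}, concludes semiregularity on $\calP$, and then uses $|\calP| = 245$ to bound the Sylow $5$-subgroup by $5$. Your added remark about $\beta_0(h)$ is not needed, but it does no harm.
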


\begin{proof}
 Let $H$ be a Sylow $5$-subgroup of $G$, and let $y \in H$.  If $y$ is not the identity and $y$ fixes any point of $\calQ$, then $y^{|y|/5}$ is an element of order $5$ that fixes a point of $\calQ$, a contradiction to Lemma \ref{lem:412p=5}.  This implies that $H$ acts semiregularly on $\mathcal{P}$, and so $|H|$ divides $|\calP| = 245$. The result follows.
\end{proof}

\begin{lem}
\label{lem:412imprim}
 If $G$ is transitive on $\calP$, then the action of $G$ on $\calP$ is not quasiprimitive, i.e., $G$ must contain a nontrivial normal subgroup that is intransitive on $\calP$.
\end{lem}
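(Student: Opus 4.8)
The plan is to assume, for a contradiction, that $G$ acts quasiprimitively on $\calP$ and then to analyze a minimal normal subgroup of $G$. The only input I expect to need from the earlier $(4,12)$-specific lemmas is arithmetic about Sylow subgroups. Since $|\calP| = (s+1)(st+1) = 5\cdot 49 = 245 = 5\cdot 7^2$, transitivity of $G$ on $\calP$ forces $5\cdot 7^2 \mid |G|$; combining this with Lemmas \ref{lem:412syl5} and \ref{lem:412syl7} shows that a Sylow $5$-subgroup of $G$ has order exactly $5$ and a Sylow $7$-subgroup of $G$ has order exactly $49$. By Lemma \ref{lem:412ple7}, every prime divisor of $|G|$ lies in $\{2,3,5,7\}$.

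Next I would let $N$ be a minimal normal subgroup of $G$. Quasiprimitivity makes $N$ transitive on $\calP$, so $245 \mid |N|$. Since $N$ is characteristically simple, it is either an elementary abelian $p$-group --- impossible, because $245$ is not a prime power --- or a direct product $T^k$ of copies of a nonabelian simple group $T$. From $5 \mid |N| = |T|^k$ we get $5 \mid |T|$, and since the $5$-part of $|N|$ divides the $5$-part of $|G|$ (which is $5$), comparing $5$-parts forces $k = 1$. Hence $N = T$ is itself a nonabelian simple group, all of whose prime divisors lie in $\{2,3,5,7\}$, whose $5$-part is exactly $5$, and whose $7$-part is exactly $49$ (the latter because $49 \mid 245 \mid |T|$ while the $7$-part of $|G|$ is $49$).

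To finish, I would invoke the classification of finite simple groups: the finite simple groups whose order is a $\{2,3,5,7\}$-number form a short, explicit list, and among these the only one whose order is divisible by $7^2$ is $\PSL(2,49)$, of order $58800 = 2^4\cdot 3\cdot 5^2\cdot 7^2$. But the $5$-part of $|\PSL(2,49)|$ is $25$, not $5$, contradicting $N \le G$. Therefore $G$ cannot be quasiprimitive on $\calP$; that is, $G$ has a nontrivial normal subgroup that is intransitive on $\calP$, which is exactly the assertion of the lemma.

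I expect the final step to be the main obstacle, in the sense that it demands being sure about the relevant slice of the classification: that $\PSL(2,49)$ is the unique finite simple group with $7^2$ dividing its order and no prime divisor exceeding $7$. The larger symplectic group $\PSp(4,7)$ has $7$-part $7^4$ rather than $49$ (and is in any case excluded by its $5$-part $25$); groups such as $\PSL(3,7)$, $\PSU(3,7)$, and $G_2(7)$ acquire a prime factor larger than $7$ from a factor of the shape $7^m \pm 1$; and no alternating group qualifies, since $7^2 \mid |A_n|$ already forces $n \ge 14$ and hence a prime factor $\ge 11$. Everything else in the argument is routine Sylow bookkeeping resting on the factorization $245 = 5\cdot 7^2$.
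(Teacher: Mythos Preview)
Your argument is correct and follows essentially the same route as the paper: assume quasiprimitivity, pass to a transitive minimal normal subgroup $N=T^k$, use the $5$-part of $|G|$ to force $k=1$, and then eliminate $T$ via the known list of nonabelian simple $\{2,3,5,7\}$-groups; the paper simply cites Praeger's structure theorem and Huppert--Lempken's classification at the two places where you argue directly. One wording slip: it is not true that $\PSL(2,49)$ is the unique such simple group with $7^2$ dividing its order, since $\PSp(4,7)$ also qualifies; what you actually need (and have) is that the $7$-part of $|T|$ is \emph{exactly} $49$, and with that constraint $\PSL(2,49)$ is indeed the only candidate, which you then exclude by its $5$-part.
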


\begin{proof}
 Assume that the action of $G$ on $\calP$ is quasiprimitive.  By \cite[Theorem 1]{PraegerQuasiprimitive}, since $|\calP|$ is not a prime power, $G$ must have a nonabelian minimal normal subgroup $N = T^k$, where $T$ is a nonabelian finite simple group and $k \in \N$, such that $N$ is transitive on $\calP$.  Moreover, by Lemma \ref{lem:412ple7}, no prime greater than $7$ divides $N$.  Assume $k \ge 2$.  Since the largest power of $5$ to divide $G$ is $5$, in this case $5 \nmid |T|$.  However, since $N$ is transitive on $\calP$, $5$ divides $|N|$, and so $5$ divides $|T|$, a contradiction. Hence $N = T$ is a finite nonabelian simple group.
 
 On the other hand, the only primes that can divide $|T|$ are $2,3,5,7$.  Moreover, $5$ and $7$ all must divide $|T|$, since $T$ is transitive on $\calP$, and the largest power of $5$ dividing $|T|$ is $5$ and the largest power of $7$ dividing $|T|$ is $49$.  By \cite[Theorem II]{HuppertLempken}, there is no such finite simple group.  Hence the action of $G$ on $\calP$ cannot be quasiprimitive.
\end{proof}

\begin{lem}
 \label{lem:412order35}
 If $G$ is transitive on $\calP$, then $G$ contains an element of order $35$.
\end{lem}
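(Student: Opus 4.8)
The plan is to exploit the structure forced by transitivity together with Lemma~\ref{lem:412imprim}. Since $G$ is transitive on the $|\calP|=245=5\cdot 7^2$ points, both $5$ and $49$ divide $|G|$; combined with Lemmas~\ref{lem:412syl5} and~\ref{lem:412syl7}, this shows that a Sylow $5$-subgroup of $G$ has order exactly $5$ and a Sylow $7$-subgroup has order exactly $49$. By Lemma~\ref{lem:412imprim}, $G$ has a nontrivial normal subgroup that is intransitive on $\calP$; replacing it by a minimal normal subgroup of $G$ contained inside it (whose orbits refine those of the larger subgroup, hence remain proper) I obtain a minimal normal subgroup $N=T^k$ of $G$, with $T$ simple, that is intransitive on $\calP$. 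As $N\trianglelefteq G$ and $G$ is transitive, all $N$-orbits have a common size $b$ with $b\mid\gcd(245,|N|)$ and $1<b<245$, so $b\in\{5,7,35,49\}$. The engine in every case is the observation that for any $M\trianglelefteq G$ one has $G/C_G(M)\hookrightarrow\Aut(M)$, so if $5\nmid|\Aut(M)|$ then (as $5\mid|G|$) $5\mid|C_G(M)|$, and dually with $7$; once I have commuting elements $g$ of order $7$ and $h$ of order $5$, the product $gh$ has order $35$ and we are done.

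If $N$ is abelian it is an elementary abelian $p$-group, and since $b$ is a power of $p$ dividing $245$ we must have $p\in\{5,7\}$ (for $p=2,3$ this would force $b=1$). If $p=5$, the Sylow bound gives $N\cong C_5$, and $\Aut(C_5)\cong C_4$ has order prime to $7$, so $C_G(N)$ contains a $7$-element, which commutes with a generator of $N$. If $p=7$, then $N\cong C_7$ or $C_7\times C_7$, and $\Aut(N)$ is $C_6$ or $\GL_2(7)$, of order prime to $5$, so $C_G(N)$ contains a $5$-element, which commutes with a $7$-element of $N$.

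If $N$ is nonabelian, $T$ is a nonabelian simple group whose order involves only the primes $2,3,5,7$ by Lemma~\ref{lem:412ple7}. Suppose first $5\nmid|T|$; then $5\nmid b$ forces $b\in\{7,49\}$, hence $7\mid|T|$, so $T\in\{\PSL_2(7),\PSL_2(8),\PSU_3(3)\}$, the only nonabelian simple groups with order involving only $2,3,7$. For each of these $|\Aut(T)|$ is prime to $5$ and $|T|$ is divisible by $7$ but not by $49$, so $k\le 2$ and $|\Aut(N)|=|\Aut(T)|^k\,k!$ is prime to $5$; thus $C_G(N)$ contains a $5$-element commuting with a $7$-element of $N$. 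Suppose instead $5\mid|T|$; the Sylow $5$ bound forces $k=1$, so $N=T$ is a nonabelian simple group whose order involves only $2,3,5,7$ and has a Sylow $5$-subgroup of order $5$. Running through the short list of such groups shows that in every case $7\nmid|\Out(T)|$ and $|T|$ is divisible by $7$ at most once; since $C_G(N)\cap N=1$ gives $NC_G(N)=N\times C_G(N)$ with $G/(N\times C_G(N))\hookrightarrow\Out(T)$, the hypothesis $49\mid|G|$ forces $7\mid|C_G(N)|$, producing a $7$-element commuting with a $5$-element of $N$. I expect the fiddliest point to be exactly this nonabelian case: identifying the relevant small simple groups and verifying the needed divisibility facts about $|T|$, $|\Out(T)|$, and $|\Aut(T)|$ leans on the classification of finite simple groups with a restricted set of prime divisors (in the spirit of the Huppert--Lempken result already used in Lemma~\ref{lem:412imprim}), rather than on anything special to generalized quadrangles.
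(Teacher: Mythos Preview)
Your argument is correct, but it takes a considerably heavier route than the paper's. The paper never passes to a \emph{minimal} normal subgroup and never invokes any classification of simple groups beyond what was already used in Lemma~\ref{lem:412imprim}. Instead, starting from an arbitrary intransitive normal subgroup $N$ (as furnished by Lemma~\ref{lem:412imprim}), the paper applies the Frattini argument: for $P$ a Sylow $p$-subgroup of $N$ one has $G=N_G(P)N$, so $|G|$ divides $|N_G(P)|\cdot|N|$. The four possible $N$-orbit structures on the $245$ points then force either $5\nmid|N|$ (whence $5\mid|N_G(P)|$ for $P$ a Sylow $7$-subgroup of $N$) or $7\nmid|N|$ (whence $7\mid|N_G(P)|$ for $P$ a Sylow $5$-subgroup of $N$). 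Since $\Aut(C_5)$, $\Aut(C_7)$, $\Aut(C_{49})$, and $\GL_2(7)$ all have orders coprime to the relevant prime, the normalizing element must centralize $P$, yielding commuting elements of orders $5$ and $7$.

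Your approach, by contrast, analyses the isomorphism type of a minimal normal subgroup $N=T^k$ via the embedding $G/C_G(N)\hookrightarrow\Aut(N)$, which in the nonabelian case requires enumerating the nonabelian simple groups with $\pi(T)\subseteq\{2,3,7\}$ (namely $\PSL_2(7)$, $\PSL_2(8)$, $\PSU_3(3)$) and, when $5\mid|T|$, checking for each nonabelian simple $T$ with $\pi(T)\subseteq\{2,3,5,7\}$ and $5\parallel|T|$ that $7\nmid|\Out(T)|$ and $7^2\nmid|T|$. These facts are true, but verifying them leans on the classification in a way the Frattini-argument proof entirely avoids. The upshot is that both proofs arrive at commuting $5$- and $7$-elements, but the paper's is substantially more elementary; your method would generalize more readily to situations where the Sylow subgroups of $N$ are large enough that their automorphism groups are no longer coprime to the other prime.
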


\begin{proof}
 Assume that $G$ is transitive on $\calP$.  It suffices to show that either the normalizer of a $5$-subgroup contains an element of order $7$ or the normalizer of a $7$-subgroup contains an element of order $5$, since, in either case, the normalizing element is forced to be in the centralizer of the $p$-subgroup.
 
 Since $G$ is transitive but not quasiprimitive on $\calP$ by Lemma \ref{lem:412imprim}, $G$ must contain an intransitive normal subgroup $N$.  Let $P$ be a Sylow $p$-subgroup of $N$ for some prime $p$.  By the Frattini Argument (see, for instance, \cite[Theorem 1.13]{Isaacs}), $G = N_G(P) N$.  This means that $|G|$ divides $|N_G(P)| \cdot |N|$.
 
 Since $N$ is intransitive on $\calP$, there are four possibilities: (i) there are $5$ distinct $N$-orbits of size $49$, (ii) there are $7$ distinct $N$-orbits of size $35$, (iii) there are $35$ distinct $N$-orbits of size $7$, or (iv) there are $49$ distinct $N$-orbits of size $5$.  

 Consider first the case when there are $5$ distinct $N$-orbits of size $49$.  Since $N$ is transitive on a set of size $49$, $49 \mid |N|$.  Let $P$ be a Sylow $7$-subgroup of $N$, which has size $49$.  Since $G$ is transitive on the five $N$-orbits, $5 \mid |G:N|$.  Since $G$ is not divisible by $25$, this implies that $5 \nmid |N|$.  However, since $5$ divides $|G|$, $|G|$ divides $|N_G(P)| \cdot |N|$, and $5$ does not divide $|N|$, we have that $5$ divides $|N_G(P)|$, and so $G$ contains an element of order $5$ that normalizes (and hence centralizes) a Sylow $7$-subgroup of $G$.
 
 We proceed similarly in the remaining cases: if there are $7$ distinct $N$-orbits of size $35$, then $7$ divides $|N_G(P)|$, where $P$ is a Sylow $5$-subgroup of $N$; if there are $35$ distinct $N$-orbits of size $7$, then $5$ divides $|N_G(P)|$, where $P$ is a Sylow $7$-subgroup of $N$; and, finally, if there are $49$ distinct $N$-orbits of size $5$, then $7$ divides $|N_G(P)|$, where $P$ is a Sylow $5$-subgroup of $N$.  In any case, if $G$ is transitive on $\calP$, then $G$ must contain an element of order $35$, as desired.
\end{proof}

We are now ready to prove Theorem \ref{thm:412}.

\begin{proof}[Proof of Theorem \ref{thm:412}]
Let $\calQ$ be a generalized quadrangle of order $(4,12)$, and let $G = \Aut(\calQ)$.  By Lemma \ref{lem:412ple7}, $13 \nmid |G|$, and so $G$ cannot be transitive on lines.

Assume that $G$ is transitive on points.  By Lemma \ref{lem:412order35}, $G$ must contain an element $h$ of order $35$.  Since $|h^5| = 7$, by Lemma \ref{lem:412p=7}, $\alpha_0(h) = 0$.  Consider $\calP_1(h)$, the set of points sent to distinct collinear points by $h$.  The orbits of $\langle h \rangle$ have size $5$, $7$, or $35$, and $\calP_1(h)$ is made up of these orbits.  However, since $|h^5| = 7$ and $|h^7| = 5$, both $h^5$ and $h^7$ are semiregular on $\calP$ by Lemmas \ref{lem:412p=7} and \ref{lem:412p=5}, and so no orbit of $\langle h \rangle$ can have size $5$ or $7$.  Thus $\alpha_1(h) \equiv 0 \pmod {35}$.

On the other hand, by Benson's Lemma (Lemma \ref{lem:Benson}), $\alpha_1(h) \equiv 1 \pmod {16}$.  By the Chinese Remainder Theorem, this means that $\alpha_1(h) \equiv 385 \pmod {560}$.  Since $\alpha_1(h) \le |\calP| = 245$, we reach a contradiction, and so $G$ cannot be transitive on $\calP$, as desired.
\end{proof}

Finally, we remark that, while the techniques used in this section relied heavily on the exact values of $s$ and $t$, the ideas used here should be applicable to other relatively small values of $s$ and $t$.

\bibliographystyle{plain}
\bibliography{GQPrimeAutomorphism}

\newpage

\appendix
\section{Calculations}
\label{app:calc}

As $t$ increases, there seems to be a steady increase in the proportion of feasible parameters $(s,t)$ of generalized quadrangles that satisfy the hypotheses of Corollary \ref{cor:notpointtrans} and hence cannot be point-transitive if they exist. 

Now, we give tables of all possible orders $(s,t)$ of generalized quadrangles with $t \le 100$ that satisfy the hypotheses of Corollary \ref{cor:notpointtrans}. The tag $(***)$ denotes that this order $(s,t)$ has the form $s = t^2 - nt$ where $s+1$ is prime and $2n<t$.
\begin{table}[H]\centering
    \begin{tabular}{ | l | l | l |l |}
    \hline
    (12, 4) ***   & (312, 26)      & (946, 44)      & (826, 59)      \\ \hline
    (22, 6)       & (442, 26) ***  & (1276, 44) *** & (660, 60)      \\ \hline
    (30, 6) ***   & (540, 27) ***  & (1408, 44) *** & (672, 60)      \\ \hline
    (42, 7) ***   & (378, 28)      & (576, 45)      & (1038, 60)     \\ \hline
    (28, 8)       & (756, 28) ***  & (630, 45)      & (1740, 60)     \\ \hline
    (40, 8) ***   & (270, 30)      & (990, 45)      & (2136, 60)     \\ \hline
    (36, 9)       & (280, 30)      & (1012, 46)     & (2380, 60)     \\ \hline
    (72, 9) ***   & (420, 30)      & (456, 48)      & (3540, 60) *** \\ \hline
    (40, 10)      & (232, 32)      & (540, 48)      & (1830, 61)     \\ \hline
    (60, 12)      & (672, 32) ***  & (1128, 48)     & (1860, 62)     \\ \hline
    (66, 12)      & (330, 33)      & (1296, 48) *** & (2542, 62) *** \\ \hline
    (78, 13)      & (442, 34)      & (1666, 49) *** & (3906, 63) *** \\ \hline
    (156, 13) *** & (1122, 34) *** & (460, 50)      & (576, 64)      \\ \hline
    (126, 14) *** & (280, 35)      & (700, 50)      & (768, 64)      \\ \hline
    (210, 15) *** & (490, 35)      & (970, 50)      & (976, 64)      \\ \hline
    (112, 16)     & (700, 35) ***  & (1200, 50)     & (1216, 64)     \\ \hline
    (240, 16) *** & (396, 36)      & (1450, 50) *** & (1600, 64)     \\ \hline
    (136, 17)     & (408, 36)      & (612, 51)      & (2016, 64)     \\ \hline
    (96, 18)      & (556, 36)      & (2550, 51) *** & (760, 65)      \\ \hline
    (210, 18)     & (612, 36)      & (796, 52)      & (910, 65)      \\ \hline
    (306, 18) *** & (630, 36)      & (1300, 52)     & (2080, 65)     \\ \hline
    (130, 20)     & (852, 36)      & (1326, 52)     & (3510, 65) *** \\ \hline
    (148, 20)     & (456, 38)      & (540, 54)      & (1408, 66)     \\ \hline
    (180, 20)     & (546, 39)      & (918, 54)      & (2112, 66)     \\ \hline
    (190, 20)     & (1482, 39) *** & (936, 54)      & (2346, 66)     \\ \hline
    (280, 20) *** & (616, 40)      & (1566, 54) *** & (4422, 67) *** \\ \hline
    (316, 20)     & (760, 40)      & (2376, 54) *** & (1666, 68)     \\ \hline
    (330, 20)     & (820, 41)      & (1870, 55) *** & (3060, 68) *** \\ \hline
    (126, 21)     & (546, 42)      & (2970, 55) *** & (1380, 69)     \\ \hline
    (210, 21)     & (732, 42)      & (616, 56)      & (2346, 69)     \\ \hline
    (420, 21) *** & (1162, 42)     & (742, 56)      & (910, 70)      \\ \hline
    (462, 22) *** & (1722, 42) *** & (856, 56)      & (2380, 70)     \\ \hline
    (136, 24)     & (430, 43)      & (1008, 56)     & (4830, 70) *** \\ \hline
    (276, 24)     & (316, 44)      & (1288, 56)     & (1096, 72)     \\ \hline
    (336, 24) *** & (616, 44)      & (2296, 56) *** & (1656, 72)     \\ \hline
    (456, 24) *** & (676, 44)      & (1596, 57)     & (2520, 72)     \\ \hline
    (600, 25) *** & (682, 44)      & (3306, 58) *** & (2556, 72)     \\ \hline
     \end{tabular}
\end{table}

\newpage

\begin{table}[H]\centering
    \begin{tabular}{ | l | l | l |}
    \hline
(3432, 72)            &  (1092, 84)  & (3060, 90)     \\ \hline
(5112, 72) ***        & (1276, 84)     & (3186, 90)     \\ \hline
(1776, 74)           & (1582, 84)   & (3690, 90)      \\ \hline
(1050, 75)       & (1596, 84)     & (5580, 90) ***    \\ \hline
(1800, 75)       & (1876, 84)   & (6210, 90) ***     \\ \hline
(4200, 75) ***  & (2268, 84)      & (8010, 90) ***     \\ \hline
(1596, 76)      & (2296, 84)     & (2002, 91)   \\ \hline
(2052, 76)      & (2436, 84)    & (3796, 91) \\ \hline
(2850, 76)      & (2856, 84)     & (8190, 91) ***    \\ \hline
(5700, 76) ***   & (4200, 84) ***  & (3082, 92) \\ \hline
(4642, 77)       & (4326, 84)     &     (6256, 92) ***    \\ \hline
(936, 78)           & (4956, 84) ***      &  (2790, 93)    \\ \hline
(1950, 78)      & (6036, 84)     &   (1692, 94)      \\ \hline
(2766, 78)        & (6580, 84)     &   (4512, 96)   \\ \hline
 (6006, 78) ***  & (990, 85)   & (4560, 96)   \\ \hline
 (6162, 79) *** & (1360, 85)     &  (6112, 96) \\ \hline
 (880, 80)      & (3570, 85)     &   (4656, 97)  \\ \hline
(1200, 80)     & (3612, 86)    &  (1288, 98)   \\ \hline
 (1216, 80)     & (4902, 86) ***      & (3136, 98)   \\ \hline
 (1360, 80)     & (2436, 87)     & (4018, 98) \\ \hline
 (1720, 80)     & (1870, 88)    & (2376, 99)  \\ \hline
 (2080, 80)     & (3916, 89)     & (2926, 99)  \\ \hline
 (2620, 80)     & (1530, 90)     & (3168, 99)   \\ \hline
 (2800, 80)      & (1548, 90)     &  (4356, 99)  \\ \hline
 (3120, 80)     & (1800, 90)   & (5346, 99) *** \\ \hline
 (3760, 80) *** & (1860, 90)     & (1900, 100) *** \\ \hline
 (4240, 80) ***  & (2010, 90)    & (4950, 100)  \\ \hline
 (4240, 80) *** & (2016, 90)     & (9900, 100) ** \\ \hline
  (4720, 80) *** & (2178, 90)    &  \\ \hline
(6480, 81) *** & (2250, 90)    &  \\ \hline

    \end{tabular}
\end{table}

\end{document}